\numberwithin{equation}{section}
\newtheorem{Theorem}{Theorem}[section]
\newtheorem{Lemma}{Lemma}[section]
\theoremstyle{definition}
\newtheorem{Definition}{Definition}[section]
\newtheorem{Remark}{Remark}[section]
\newcounter{RomanNumber}
\def\be{\begin{equation}}
\def\en{\end{equation}}
\def\bs{\begin{split}}
\def\es{\end{split}}
\title[Global Stability and Non--Vanishing  Vacuum States]
{Global Stability and Non--Vanishing  Vacuum States of 3D
Compressible Navier--Stokes Equations}
\author{Guochun Wu}
\address{Guochun Wu \newline Fujian Province University Key Laboratory of Computational Science, School of Mathematical Sciences, Huaqiao University, Quanzhou 362021, P.R. China.}
\email{guochunwu@126.com}
\author{Lei Yao}
\address{Lei Yao \newline  School of Mathematics and Statistics, Northwestern Polytechnical University, Xi'an 710129, P.R. China.
 School of Mathematics and Center for Nonlinear Studies, Northwest University, Xi'an 710127, P.R. China.}
\email{yaolei1056@hotmail.com, leiyao@nwu.edu.cn}
\author{Yinghui Zhang*}
\address{Yinghui Zhang \newline Center for Applied Mathematics of Guangxi, Guangxi Normal University, Guilin, Guangxi 541004, P.R.
China} \email{yinghuizhang@mailbox.gxnu.edu.cn}
\subjclass[2010]{35Q30, 35K65, 76N10}
\thanks{Corresponding author: yinghuizhang@mailbox.gxnu.edu.cn.}
\keywords{Navier--Stokes equations;\, Non--Vanishing  of Vacuum
States;\, Global  Stability.}\bigbreak
\date{\today}
\begin{document}
\begin{abstract}
We investigate global stability and non--vanishing  vacuum
states of large solutions to the compressible Navier--Stokes
equations on the torus $\mathbb{T}^3$, and the main purpose of this
work is three-fold: First, under the assumption that the density
$\rho({\bf{x}}, t)$ verifies $\sup_{t\geq 0}\|\rho(
t)\|_{L^\infty}\leq M$, it is shown that the solutions converge to
equilibrium state exponentially in $L^2$--norm. In contrast to
previous related works where the density has uniform positive lower
and upper bounds, this gives the first stability result for large
strong solutions of the 3D compressible Navier--Stokes equations in
the presence of vacuum. Second, by employing some new thoughts, we
also show that the density converges to its equilibrium state
exponentially in $L^\infty$--norm if additionally the initial
density $\rho_0({\bf{x}})$ satisfies $\inf_{{\bf{x}}\in
\mathbb{T}^3}\rho_0({\bf{x}})\geq c_0>0$. Finally, we prove that the
vacuum state will persist for any time provided that the initial density contains vacuum, which is different from
the previous
work of [H. L. Li et al., Commun. Math. Phys., 281 (2008),
401--444], where the authors showed that any vacuum state must
vanish within finite time for the free boundary problem of the 1D
compressible Navier--Stokes equations with density--dependent
viscosity $\mu(\rho)=\rho^\alpha$ with $\alpha>1/2$.
This phenomenon implies the different behaviors
for Navier--Stokes equations with different types of viscous effects, namely,
degenerate or not.
\end{abstract}

\maketitle

\section{Introduction }
\setcounter{equation}{0}
In this paper, we are concerned with the global stability and
non--vanishing of vacuum states of large solutions to the
compressible Navier--Stokes equations on the torus $\mathbb{T}^3$:
\begin{equation}\begin{cases}
 \rho_t+\text{div} (\rho {\bf u})=0,\\
(\rho {\bf u})_t+\text{div}(\rho {\bf u}\otimes{\bf u})+\nabla P(\rho)=\mu\Delta {\bf u}+(\mu+\lambda) \nabla \hbox{div} {\bf u}.\end{cases}   \label{1.1}
\end{equation}
Here     $\rho=\rho({\bf x},t)$ and ${\bf u}=(u^1({\bf
x},t),u^2({\bf x},t),u^3({\bf x},t))^T$   stand for  the density and
the velocity respectively, at position ${\bf x}\in \mathbb T^3$ and
time $t\geq 0$. The pressure $P(\rho)=\rho^\gamma$, where $\gamma>1$
is the specific heat ratio. The constants $\mu$ and $\lambda$ are
the shear viscosity and the bulk viscosity of the fluid satisfying
the physical hypothesis:
$$\mu>0\ \ \text{and}\ \ 2\mu+3\lambda\geq 0.$$
Finally, the system \eqref{1.1} is supplemented with the following
initial condition:
\begin{equation}(\rho,\rho{\bf u})|_{t=0}=(\rho_0,\overline{{\bf m}}_0)(\bf
x),~~~~~~x\in \mathbb{T}^3\label{1.2}.
\end{equation}
Without loss of
generality, the mean value of total initial mass
over $\mathbb{T}^3$ is taken to be one throughout this paper, i.e.,
\begin{equation}\label{1.3} \frac{1}{|\mathbb T^3|}\int_{\mathbb T^3}\rho_0({\bf x})\mathrm{d}{\bf x}=1.\end{equation}
\smallskip

\subsection{History of the problem and main motivation} To put our results into context, let us highlight some progress on the topics of
global well--posedness  and  stability for the multidimensional
compressible Navier--Stokes equations. The global well--posedness of
classical solutions in the whole space $\mathbb{R}^3$ was firstly established by
Matsumura and Nishida \cite{Matsumura27} provided that the initial
data are close to a non--vacuum equilibrium in $H^3$. With the help of the \textit{effective viscous flux},
Hoff \cite{Hoff1,Hoff2} proved the global existence of weak
solutions with discontinuous initial data, i.e., the initial density
is close to a positive constant in $L^2$ and $L^\infty$, and the
initial velocity is small in $L^2$ and bounded in $L^{2^N}$, $N=2,3$
is the dimension of space.
 Under the framework of Besov space, Danchin \cite{Dan1} investigated  existence and uniqueness of the global strong
 solutions under the hypothesis that the initial value are close to a non--vacuum equilibrium state, see also \cite{Dan3,CMZ}.
 When the initial density is allowed to vanish and the spatial measure of the set of vacuum can be arbitrarily large, Huang et al.
 \cite{HuangLi1}   proved  global existence and uniqueness of classical solutions with smooth initial data
 that are of small energy in whole space $\mathbb{R}^3$, see also \cite{LiXin}. For the existence of solutions with arbitrary initial data,
 the major breakthrough is due to Lions \cite{Lions}, where he used the renormalization skills introduced by DiPerna and Lions
 \cite{DiPerna1} to establish global weak solutions if $\gamma> 3N/(N+2)$.  Later, Feireisl et al. \cite{Feireisl} improved  Lions's result
 to the case $\gamma>\frac{N}{2}$. When the initial data are assumed to have some spherically symmetric or  axisymmetric properties, Jiang and Zhang
 \cite{Jiang-Zhang 2001, Jiang1} proved the existence of global weak solutions for any $\gamma>1$. Plotnikov and Weigant \cite{Plotnikov} obtained
 the global existence of weak solutions to the  isothermal compressible Navier--Stokes equations in dimension two under some additional assumptions.
 Desjardins \cite{Desjardins} studied the regularity of weak solutions for small time under periodic boundary
 conditions, and particularly showed that weak solutions in $\mathbb T^2$ turn out to be smooth as
long as the density remains bounded in $L^\infty(\mathbb T^2)$. Due
to  the possible concentration of finite kinetic energy in very
small domains, whether those results in \cite{Feireisl,Lions} still
hold true for the case $\gamma \in [1,\frac{N}{2}]$ remains an
outstanding open problem.  Recently, Hu \cite{Hu} considered the
Hausdorff dimension of concentration for the compressible
Navier--Stokes equations. If $\gamma \in [1,\frac{N}{2}]$, he proved
that except for a space--time set with a Hausdorff dimension of less
than or equal to $\Gamma(n)+1$ with
$$\Gamma(n)=\max\left\{\gamma(n),n-\frac{n\gamma}{\gamma(n)+1}\right\}\ \text{and}\ \gamma(n)=\frac{n(n-1)-n\gamma}{n-\gamma},$$
no concentration phenomenon occurs.

In addition to the global--in--time existence, large time behavior
of solutions is also an important topic in the mathematical theory
of the physical world. Under the smallness assumption on the initial
perturbation,  the readers can refer  to
\cite{Dan4,Duan1,GuoY,LiuWang,Matsumura3,Matsumura2,Ponce} and
references therein for large time behavior of global smooth
solutions to the compressible Navier--Stokes system.  Recently, He
et al. \cite{He} investigated the global stability of large strong
solutions to the 3D Cauchy problem. Under the hypothesis that the
density $\rho({\bf x},t)$ verifies $\inf_{{\bf x}\in\mathbb
R^3}\rho_0({\bf x})\ge c_0>0$ and $\sup_{t\ge
0}\|\rho(t)\|_{C^\alpha}\le M$ with arbitrarily small $\alpha$, they
established a new approach for the convergence of the solutions to
its associated equilibrium states with an explicit decay rate which
is the same as that of the heat equation for the case
$\mu>\frac{1}{2}\lambda$. The assumption $\sup_{t\ge
0}\|\rho(t)\|_{C^\alpha}\le M$ played an essential role to derive
the uniform positive lower bound of the density $\rho$ (See the
proof of Proposition 2.3 in \cite{He} for details).  As concerned
with long time behavior of large weak solutions, Feireisl and
Petzeltov\'a \cite{Feireisl2} first showed that any weak solution
converges to a fixed stationary state as time goes to infinity via
the weak convergence method. Under the assumptions that the density
is essentially bounded and has uniform in time positive lower bound,
Padula \cite{Padula} proved that weak solutions decay exponentially
to the equilibrium state in $L^2$--norm. With the help of the
operator $\mathcal B$ introduced by Bogovskii \cite{Bogovskii}, Fang
et al. \cite{Fang} removed the restriction on the uniform positive
lower bound of the density. Recently, Peng--Shi--Wu \cite{PSW}
improved those results of \cite{Fang, Padula} to the case that they
didn't need both upper and lower time--independent bounds of
density. Recently, Zhang et al. \cite{Zi} showed that global regular
solutions of the full compressible Navier--Stokes equations on the
torus $\mathbb T^3$ converge to equilibrium with exponential rate
provided that both the density $\rho $ and temperature $\theta$
possess uniform in time positive lower and upper bounds.
\par
Motivated by \cite{He} and \cite{LiLiXin}, the main purpose of this
paper is to investigate global stability and non--vanishing vacuum
states of large strong solutions to the compressible Navier--Stokes
equations on the torus $\mathbb{T}^3$. More precisely, we are
concerned with the following three problems:

\par

(i) Notice that the density has uniform positive lower and upper
bounds in \cite{He, Zi}. Therefore, an important and interesting
problem is: What about the stability of large strong solutions for
the 3D compressible Navier--Stokes equations in presence of vacuum?

(ii) As mentioned before, assumptions that initial density
$\rho_0(x)$ has uniform positive lower bound and  $\sup_{t\ge
0}\|\rho(t)\|_{C^\alpha}\le M$ with arbitrarily small $\alpha$ in
\cite{He} played an essential role to derive the uniform positive
lower bound of the density $\rho$. Therefore, the natural and
interesting problem is: Can we show that the solutions converge to
equilibrium state exponentially in $L^2$--norm under the assumption
that $\sup_{t\geq 0}\|\rho( t)\|_{L^\infty}\leq M$ only?

\par(iii) Li--Li--Xin \cite{LiLiXin} showed that any
vacuum state will not exist within finite time for the free boundary
problem of the 1D compressible Navier--Stokes equations with
density--dependent viscosity $\mu(\rho)=\rho^\alpha$ with
$\alpha>1/2$. However, whether this result holds true for
multidimensional case still remains an outstanding open problem.
Therefore, a natural and important problem is: Provided that the initial density
contains vacuum, whether the vacuum state
persists or not for the 3D compressible Navier--Stokes equations?

\bigskip

The main purpose of this article is  to give a clear answer to the
above three problems.

\smallskip

\subsection{Main results}
Throughout this paper, we assume that the initial data satisfy
\begin{equation}
0\leq  \rho_0\in W^{1,q}(\mathbb T^3),\  \text{for\ some } \ q\in(3,6]\ \text{and}\
{\bf u}_0\in H^{2}(\mathbb T^3).   \label{1.4}
\end{equation}
We use $C$ to denote a generic constant independent of time which
may vary in different places. If $X$ is a Banach space, we will
abbreviate the vector--valued space $X^3$ by $X$ for convenience.
As in \cite{Feireisl, Hoff1,Hoff2, Lions}, the \textit{effective viscous flux} $F$
and vorticity ${\bf w}$ are defined by
\begin{equation} F\overset{\text{def}}= (2\mu+\lambda)\text{div}{\bf u}-(P(\rho)-1)
\ \ \text{and} \ \ {\bf w}\overset{\text{def}}=\mathcal P {\bf u}, \label{1.5}\end{equation}
where $\mathcal P=I +\nabla (-\Delta)^{-1}\textrm{div}$ denotes  the
projection on the space of divergence--free vector fields.

Now, we are ready to state our results. To begin with, we introduce
the definition of strong solutions to the problem
\eqref{1.1}--\eqref{1.2}.

\begin{Definition}[Strong solutions] For  $T>0$, a pair of function $(\rho, {\bf u})$ is said to be a strong solution of the  problem \eqref{1.1}--\eqref{1.2} on $\mathbb T^3\times [0,T]$, if for some $q\in(3,6]$
\begin{equation}\begin{cases}
0\leq \rho\in C([0,T];W^{1,q}(\mathbb T^3)),\ \ \rho_t\in C([0,T];L^q(\mathbb T^3)), \\
{\bf u}\in C([0,T];H^{2}(\mathbb T^3))\cap L^2(0,T; W^{2,q}(\mathbb T^3)),\ \
{\bf u}_t\in L^2(0,T;H^1(\mathbb T^3)),\\
\sqrt{\rho}{\bf u}_t\in L^\infty(0,T;L^2(\mathbb T^3)),\\
\end{cases}   \label{1.6}
\end{equation}
and $(\rho,\bf u)$ satisfies \eqref{1.1} a.e. on $\mathbb T^3\times [0,T]$.
\label{d1}\end{Definition}

If the initial density contains vacuum, we have the following result
on stability of a strong solution in $L^2$--norm to the  problem
\eqref{1.1}--\eqref{1.2}.
\begin{Theorem}\label{1mainth} Assume that the initial data $(\rho_0,{\bf u}_0)$ satisfy
\eqref{1.3}--\eqref{1.4},
and $K:=\|(\rho_0-1)\|_{L^2(\mathbb T^3)}+\|\sqrt{\rho_0}({\bf u}_0-{\bf
m}_0)\|_{L^2(\mathbb T^3)}+\|\nabla{\bf u}_0\|_{L^2(\mathbb
T^3)}<+\infty$ with ${\bf m_0}=\int_{\mathbb T^3}\rho_0{\bf
u_0}({\bf x})\mathrm{d}{\bf x}$. Let $(\rho,{\bf u})$ be a global
strong solution to the problem \eqref{1.1}--\eqref{1.2} verifying that
\begin{equation}\label{1.7}\sup\limits_{t\ge 0}\|\rho(\cdot,t)\|_{L^\infty(\mathbb T^3)}\leq M,\end{equation}
for some positive constant $M$. Then, there exist two positive
constants $C_1>0$ and $\eta_1>0$, which are dependent on $M$ and
$K$, but independent of $t$, such that
\begin{equation}\label{1.8}\|(\rho-1)(\cdot,t)\|_{L^2(\mathbb T^3)}+\|\sqrt{\rho}({\bf u}-{\bf m}_0)(\cdot,t)\|_{L^2(\mathbb T^3)}+\|\nabla
{\bf{u}}\|_{L^2(\mathbb T^3)}\leq C_1\text{e}^{-\eta_1 t},
\end{equation}
for any $t\ge 0$.
\end{Theorem}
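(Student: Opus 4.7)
The plan is to build a Lyapunov functional from a relative--entropy energy plus a Bogovskii correction for the density, and then feed the resulting exponential decay into an $H^1$--level energy identity to recover decay of $\|\nabla\mathbf u\|_{L^2}$. For the basic energy I introduce $\mathcal G(\rho):=\frac{\rho^\gamma-1-\gamma(\rho-1)}{\gamma-1}$, which vanishes to second order at $\rho=1$ and, since $0\le\rho\le M$, is pointwise comparable to $(\rho-1)^2$. Conservation of mass and of total momentum on $\mathbb T^3$ give $\int\rho\,d\mathbf x=|\mathbb T^3|$ and $\int\rho\mathbf u\,d\mathbf x=\mathbf m_0$ for all $t$, so testing the momentum equation against $\mathbf u-\mathbf m_0$ and using the continuity equation yields
\[
\frac{d}{dt}\!\int_{\mathbb T^3}\!\Bigl(\tfrac12\rho|\mathbf u-\mathbf m_0|^2+\mathcal G(\rho)\Bigr)d\mathbf x+\mu\|\nabla\mathbf u\|_{L^2}^2+(\mu+\lambda)\|\operatorname{div}\mathbf u\|_{L^2}^2=0.
\]
Setting $E(t):=\|\sqrt\rho(\mathbf u-\mathbf m_0)\|_{L^2}^2+\|\rho-1\|_{L^2}^2$, this gives $E(t)\lesssim K^2$ and $\int_0^\infty\|\nabla\mathbf u\|_{L^2}^2\,ds\lesssim K^2$, but no rate yet, since the dissipation is degenerate in $\rho-1$.

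To manufacture dissipation for the density, I set $\Phi:=\mathcal B[\rho-1]$, the Bogovskii lift, which is well defined because $\int(\rho-1)=0$ and satisfies $\operatorname{div}\Phi=\rho-1$ together with $\|\Phi\|_{W^{1,p}}\le C\|\rho-1\|_{L^p}$ for $1<p<\infty$. Testing the momentum equation by $\Phi$, the pressure term produces
\[
-\!\int\!\nabla P(\rho)\cdot\Phi\,d\mathbf x=\int(P(\rho)-1)(\rho-1)\,d\mathbf x\ge c(M,\gamma)\|\rho-1\|_{L^2}^2,
\]
using that $(\rho^\gamma-1)/(\rho-1)$ is bounded below by a positive constant on $[0,M]$. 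The other contributions produce a total derivative $\tfrac{d}{dt}\!\int\!\rho\mathbf u\cdot\Phi$, a convective remainder $\int\rho\mathbf u\otimes\mathbf u:\nabla\Phi$, a viscous piece bounded by $\|\nabla\mathbf u\|_{L^2}\|\rho-1\|_{L^2}$, and a term with $\Phi_t=-\mathcal B[\operatorname{div}(\rho\mathbf u)]$ controlled by $\|\rho\mathbf u\|_{L^2}\le C(M,|\mathbf m_0|)(1+E(t)^{1/2})$.

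I then form $\mathcal L(t):=E(t)+\delta\!\int\!\rho\mathbf u\cdot\Phi\,d\mathbf x$ for small $\delta=\delta(M,\gamma,\mu,\lambda)>0$; Cauchy--Schwarz and the Bogovskii bound give $\tfrac12 E\le\mathcal L\le 2E$. Combining the two preceding identities and hiding the cross terms by Young I expect
\[
\frac{d\mathcal L}{dt}+c_0\bigl(\|\nabla\mathbf u\|_{L^2}^2+\|\rho-1\|_{L^2}^2\bigr)\le(\text{absorbable nonlinear remainders}).
\]
To promote this into $\mathcal L'+\eta\mathcal L\le 0$ I need the weighted Poincar\'e bound $\|\sqrt\rho(\mathbf u-\mathbf m_0)\|_{L^2}^2\lesssim\|\nabla\mathbf u\|_{L^2}^2+\|\rho-1\|_{L^2}^2$, which follows from the identity $\bar{\mathbf u}-\mathbf m_0=-\frac{1}{|\mathbb T^3|}\!\int(\rho-1)\mathbf u\,d\mathbf x$ (read off combined mass and momentum conservation), the torus Poincar\'e inequality for $\mathbf u-\bar{\mathbf u}$, and a one--step bootstrap absorbing $\|\rho-1\|_{L^2}^2\|\mathbf u\|_{L^2}^2$ once $\|\rho-1\|_{L^2}$ is known to be small. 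Gronwall then gives $E(t)\le Ce^{-\eta t}$; to recover $\|\nabla\mathbf u(t)\|_{L^2}\le C_1 e^{-\eta_1 t}$ I test the momentum equation with $\mathbf u_t$ to obtain
\[
\tfrac12\tfrac{d}{dt}\bigl(\mu\|\nabla\mathbf u\|_{L^2}^2+(\mu+\lambda)\|\operatorname{div}\mathbf u\|_{L^2}^2\bigr)+\|\sqrt\rho\,\mathbf u_t\|_{L^2}^2=-\!\int\!\rho(\mathbf u\cdot\nabla)\mathbf u\cdot\mathbf u_t-\!\int\!\nabla P\cdot\mathbf u_t,
\]
and close by a mean--value argument on a unit time window using the exponential decay of $E$ and the strong--solution regularity.

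The genuinely new difficulty, compared to \cite{He,Zi}, is the absence of any uniform positive lower bound on $\rho$: convective integrals such as $\int\rho\mathbf u\otimes\mathbf u:\nabla\Phi$ and $\int\rho(\mathbf u\cdot\nabla)\mathbf u\cdot\mathbf u_t$ cannot be absorbed via $\sqrt\rho\,\mathbf u$ alone, so they must be passed through $\|\mathbf u\|_{L^4}$ and hence through $\|\mathbf u\|_{L^2}$, which on $\mathbb T^3$ is only tied to the conserved $\mathbf m_0$ through the $L^2$--smallness of $\rho-1$. Resolving this circular dependence by the Poincar\'e--bootstrap argument above is, in my view, the crux of the proof and is exactly what the relaxation from $\sup\|\rho\|_{C^\alpha}\le M$ in \cite{He} to $\sup\|\rho\|_{L^\infty}\le M$ here costs.
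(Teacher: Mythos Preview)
Your overall two--stage strategy (Lyapunov functional at the $L^2$ level, then bootstrap to $\|\nabla\mathbf u\|_{L^2}$) matches the paper's, and the Bogovskii correction you use is a legitimate alternative to the paper's choice of testing $\Delta^{-1}\operatorname{div}$ of the momentum equation against $\varrho$; both produce the missing density dissipation. Two points, however, deserve correction.

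First, the ``crux'' you identify is not there. The weighted Poincar\'e bound requires no bootstrap and no smallness of $\|\rho-1\|_{L^2}$. From $\int\rho=1$ and $\int\rho\mathbf u=\mathbf m_0$ one has
\[
\bar{\mathbf u}-\mathbf m_0=\int_{\mathbb T^3}\rho\,(\bar{\mathbf u}-\mathbf u)\,d\mathbf x,\qquad\text{hence}\qquad |\bar{\mathbf u}-\mathbf m_0|\le M\,\|\mathbf u-\bar{\mathbf u}\|_{L^1}\le C(M)\|\nabla\mathbf u\|_{L^2},
\]
so $\|\mathbf u-\mathbf m_0\|_{L^r}\le C(M)\|\nabla\mathbf u\|_{L^2}$ for all $1\le r\le 6$, unconditionally. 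This is exactly what the paper records as (3.14), and it immediately kills the convective terms you were worried about; no circular dependence arises. Your identity $\bar{\mathbf u}-\mathbf m_0=-|\mathbb T^3|^{-1}\!\int(\rho-1)\mathbf u$ is the same relation written differently, but estimating it as $\|\rho-1\|_{L^2}\|\mathbf u\|_{L^2}$ is the wrong move.

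Second, your $H^1$ step is where the real work lies, and your sketch does not close it. Testing with $\mathbf u_t$ (or, as the paper does, with $\dot{\mathbf u}$) leaves cubic terms of the type $\|\nabla\mathbf u\|_{L^3}^3$, and in the presence of vacuum you cannot pass these through $\|\nabla^2\mathbf u\|_{L^2}$ because the elliptic estimate for the Lam\'e system degenerates. The paper handles this via the effective viscous flux $F=(2\mu+\lambda)\operatorname{div}\mathbf u-(P-1)$ and vorticity $\mathbf w$: one has $\|\nabla\mathbf u\|_{L^6}\lesssim\|\sqrt\rho\,\dot{\mathbf u}\|_{L^2}+\|\varrho\|_{L^6}+\text{l.o.t.}$, which lets the cubic terms be absorbed into $\tfrac12\|\sqrt\rho\,\dot{\mathbf u}\|_{L^2}^2$ plus powers of $\|\nabla\mathbf u\|_{L^2}$ (see (3.23)--(3.24)). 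Even then the resulting inequality contains $\|\nabla\mathbf u\|_{L^2}^6$ on the right and does \emph{not} close directly; the paper exploits that $\int_0^\infty\|\nabla\mathbf u\|_{L^2}^2\,dt<\infty$ together with continuity in time to find $T_1$ with $\|\nabla\mathbf u(T_1)\|_{L^2}$ small, and then runs a continuity/contradiction argument to show it stays small for $t\ge T_1$, after which the combined functional satisfies a clean Lyapunov inequality. Your ``mean--value on a unit time window'' gestures at this, but without the $F$--based control of $\|\nabla\mathbf u\|_{L^6}$ and the smallness--persistence argument, the $H^1$ decay does not follow.
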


\smallskip
\begin{Remark} The question naturally arises whether the solution $(\rho,{\bf u})$ stated in Theorem \ref{1mainth} exists or not.
When the initial density is allowed to vanish and the spatial
measure of the set of vacuum can be arbitrarily large, Huang, Li and
Xin   \cite{HuangLi1}   established the global existence and
uniqueness of classical solutions  in whole space $\mathbb{R}^3$ if
the initial energy is small but the oscillations could be
arbitrarily large, see also \cite{LiXin}. One of  the key
ingredients in \cite{HuangLi1,LiXin} is to derive a
time--independent upper bound of the density. So, under the
assumption that the initial energy is small,  using the similar
arguments as that in \cite{HuangLi1,LiXin}, we can show that a
strong solution $(\rho,{\bf u})$  satisfying  \eqref{1.7} indeed
exists.

\end{Remark}

\smallskip
\begin{Remark} Compared to \cite{He, Zi} where the density has uniform positive lower
and upper bounds, this gives the first stability result for large
strong solutions of the 3D compressible Navier--Stokes equations in
the presence of vacuum.
\end{Remark}

\begin{Remark} It is interesting to make a comparison between Theorem \ref{1mainth}
and those of Peng--Wu--Shi \cite{PSW}, where the authors give global exponential stability of
finite energy weak solutions constructed by Lions and Feireisl etc.
More precisely, for general large data, by both using the
extra integrability of the density due to Lions and constructing a suitable Lyapunov
functional, Peng--Wu--Shi \cite{PSW} showed that
\begin{equation}\nonumber\int_\Omega(\rho|\mathbf{u}|^2 + G(\rho, \rho_s))dx \leq C \exp\{-Ct\},\end{equation} where
$\rho_s=\displaystyle\frac{1}{|\Omega|}\int_\Omega \rho_0 dx$, and $G(\rho, \rho_s):= \displaystyle\rho \int_{\rho}^{\rho_s}\frac{h^\gamma-\rho_s^\gamma}{h^2}dh$.
If the density $\rho$ has upper bound, it is easy to check that the above exponential decay estimate implies
\begin{equation}\nonumber\|\rho-\rho_s\|_{L^2}+\|\sqrt{\rho}\mathbf{u}\|_{L^2}\leq C \exp\{-Ct\}.\end{equation}
Compared to the exponential decay estimate in \eqref{1.8}, this gives no information for the large time behavior of $\|\nabla \mathbf{u}\|_{L^2}$.

\end{Remark}

\smallskip
\begin{Remark} Our methods can be applied to investigate global
stability of large strong solutions to full compressible
Navier--Stokes equations on $\mathbb{T}^3$. When the initial density
is allowed to vanish, i.e., $\rho_0(x)\geq 0$, we can prove global
exponential stability of strong solutions provided that the density
$\rho(x, t)$ verifies $\sup\limits_{t\ge
0}\|\rho(\cdot,t)\|_{L^\infty(\mathbb T^3)}\leq M$. This result will
be reported in our forthcoming paper \cite{WYZ}.
\end{Remark}
\smallskip

If the initial density possesses uniform positive lower bound, we
have the following result on the stability of the density in
$L^\infty$--norm to the problem \eqref{1.1}--\eqref{1.2}.

\begin{Theorem}\label{2mainth} Assume that all conditions  of Theorem \ref{1mainth} are in
force. If additionally $\inf\limits_{{\bf x}\in\mathbb
T^3}\rho_0({\bf x})\ge c_0>0$, then there exist two positive
constants $C_2>0$ and $\eta_2>0$, which are dependent on $c_0$, $M$
and $K$, but independent of $t$, such that
\begin{equation}\label{1.9}\|(\rho-1)(\cdot,t)\|_{L^\infty(\mathbb T^3)}\leq C_2\text{e}^{-\eta_2
t},
\end{equation}
for any $t\ge 0$.
\end{Theorem}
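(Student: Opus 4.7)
The plan is to bootstrap from the $L^2$ exponential decay already established in Theorem \ref{1mainth} to the $L^\infty$ exponential decay of $\rho-1$, by combining three ingredients: (i) an upgrade of the decay to stronger spatial norms of the effective viscous flux $F$; (ii) propagation of a uniform positive lower bound of $\rho$; and (iii) a damped transport equation for $P(\rho)-1$ solved along the flow.

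First I would upgrade the $L^2$ decay to an $L^\infty$ decay of $F$. Taking the divergence of the momentum equation yields
\[
\Delta F = \mathrm{div}(\rho\dot{\bf u}),\qquad \dot{\bf u}:={\bf u}_t+{\bf u}\cdot\nabla{\bf u},
\]
so $L^q$ elliptic regularity on $\mathbb T^3$ (fixing the mean of $F$ from \eqref{1.5}) gives $\|\nabla F\|_{L^q}\le C\|\rho\dot{\bf u}\|_{L^q}$, and Sobolev embedding with $q\in(3,6]$ then provides $\|F\|_{L^\infty}\le C\|\rho\dot{\bf u}\|_{L^q}$. Using the standard higher-order energy identity for $\dot{\bf u}$ (differentiating the momentum equation in $t$ and testing with $\dot{\bf u}$), interpolating between $\|\sqrt\rho\dot{\bf u}\|_{L^2}$ and $\|\nabla\dot{\bf u}\|_{L^2}$, and feeding in the exponential decay from Theorem \ref{1mainth}, I expect to obtain $\|F(t)\|_{L^\infty}+\|\mathrm{div}\,{\bf u}(t)\|_{L^\infty}\le C\mathrm{e}^{-\eta t}$ for some $\eta>0$. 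This step is where I anticipate the principal difficulty.

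Second, I would propagate the strict positivity of $\rho$. Along the flow $X(t;x)$ generated by ${\bf u}$, the continuity equation integrates to
\[
\rho(X(t;x),t)=\rho_0(x)\exp\Bigl(-\int_0^t \mathrm{div}\,{\bf u}(X(s;x),s)\,ds\Bigr).
\]
Exponential decay of $\|\mathrm{div}\,{\bf u}\|_{L^\infty}$ from step one makes $\int_0^\infty\|\mathrm{div}\,{\bf u}(s)\|_{L^\infty}\,ds$ finite, so together with $\inf\rho_0\ge c_0$ this yields a uniform positive lower bound $\rho(x,t)\ge c>0$. Combined with \eqref{1.7}, one has $c\le\rho\le M$ pointwise.

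Finally, inserting $(2\mu+\lambda)\mathrm{div}\,{\bf u}=F+(P(\rho)-1)$ into the equation for $P(\rho)$ gives, along characteristics,
\[
\frac{d}{dt}\bigl(P(\rho)-1\bigr)+\frac{\gamma\rho^\gamma}{2\mu+\lambda}\bigl(P(\rho)-1\bigr)=-\frac{\gamma\rho^\gamma}{2\mu+\lambda}F.
\]
Since $\rho\in[c,M]$, the damping coefficient lies between two positive constants $\kappa_1\le\kappa_2$, and Duhamel's formula together with the exponential decay of $\|F\|_{L^\infty}$ gives
\[
\|P(\rho)(\cdot,t)-1\|_{L^\infty}\le C\mathrm{e}^{-\eta' t}
\]
for some $\eta'>0$. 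A mean value expansion $P(\rho)-1=\gamma\tilde\rho^{\gamma-1}(\rho-1)$ with $\tilde\rho\in[c,M]$ then converts this into the desired estimate \eqref{1.9}. The main obstacle throughout is the quantitative bootstrap in step one: establishing exponential decay of $\|F\|_{L^\infty}$ uniformly in time, which requires higher-order energy estimates for $\dot{\bf u}$ that remain controlled as $t\to\infty$.
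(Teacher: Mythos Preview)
Your Steps 2 and 3 are essentially the same mechanism the paper uses (a damped ODE along characteristics, then converting back to $\rho-1$), but the paper organizes the argument around a different quantity precisely to avoid your Step~1, which you rightly flag as the principal difficulty but do not actually carry out.

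The issue with Step~1 is that to get $\|F(t)\|_{L^\infty}$ (or $\|\mathrm{div}\,{\bf u}(t)\|_{L^\infty}$) decaying exponentially you need pointwise-in-$t$ control of $\|\rho\dot{\bf u}\|_{L^q}$ with $q>3$, hence of $\|\nabla\dot{\bf u}\|_{L^2}$. Theorem~\ref{1mainth} and its proof only yield $\|\sqrt{\rho}\dot{\bf u}\|_{L^2}\in L^2_t(0,\infty)$ (cf.~\eqref{3.30}, \eqref{4.6}), not exponential decay of higher norms. Closing a uniform-in-time Lyapunov inequality at the $\dot{\bf u}$ level is possible in principle but is an additional layer of work comparable to, or harder than, the theorem itself; as written, your proposal assumes this rather than proves it.

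The paper sidesteps this entirely. Instead of $P(\rho)-1$ with forcing $F$, it works with
\[
H:=(2\mu+\lambda)\log\rho+\Delta^{-1}\mathrm{div}(\rho{\bf v}),
\]
which along the flow satisfies $\dot H+(P(\rho)-1)=[v^j,\mathcal R_i\mathcal R_j](\rho v^i)+[m_0^j,\mathcal R_i\mathcal R_j](\rho v^i)$. The point is that the forcing is now a \emph{commutator}, and the Coifman--Lions--Meyer--Semmes estimate gives
\[
\|[v^j,\mathcal R_i\mathcal R_j](\rho v^i)\|_{L^\infty}\le C\big(\|\sqrt{\rho}\dot{\bf u}\|_{L^2}+1\big)^{3/2}\|\nabla{\bf u}\|_{L^2}^{1/2},
\]
which is $L^1_t$-integrable using only \eqref{1.8} and \eqref{4.6}. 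This suffices both for the uniform lower bound on $\rho$ (Lemma~\ref{lemma4.1}, via a continuity argument on $H$) and, once $c_1\le\rho\le M$, for the exponential decay of $|H|$ and hence of $\|\rho-1\|_{L^\infty}$. In short: your route trades the commutator trick for a higher-order energy bootstrap that you have not supplied; the paper's route needs nothing beyond what Theorem~\ref{1mainth} already provides.
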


\smallskip
\begin{Remark}
It is worth mentioning that to prove Theorem \ref{2mainth},
we only assume that the density $\rho$ is bounded from above, while
the theory on global stability of large solutions developed in
\cite{He} requires the additional assumption $\sup_{t\ge
0}\|\rho(t)\|_{C^\alpha}\le M$ with arbitrarily small $\alpha$,
which plays an essential role in deriving the uniform positive lower
bound of $\rho$ in \cite{He} (See the proof of Proposition 2.3 in
\cite{He} for details).
\end{Remark}
\smallskip

\smallskip
\begin{Remark}
To prove Theorem \ref{2mainth}, the key ingredient is to get a
time--independent positive lower bound of the density $\rho$ (See
Lemma \ref{lemma4.1}). With the key time--independent positive upper
and lower bounds of the density $\rho$ in hand, we can modify the
methods of \cite{Zi} to obtain the exponential decay rates of higher--order
spatial derivatives of the solutions.
\end{Remark}
\smallskip

Provided that the vacuum states are present initially, we shall
prove that the vacuum states will not vanish for any time.

\begin{Theorem}\label{3mainth} Assume that all conditions of   Theorem \ref{1mainth} are in
force. If additionally $\inf\limits_{{\bf x}\in\mathbb
T^3}\rho_0({\bf x})=0$, then it holds that
\begin{equation}\label{1.10}\inf\limits_{{\bf x}\in\mathbb T^3}\rho({\bf
x},t)=0,
\end{equation}
for any  $t\ge 0$.
\end{Theorem}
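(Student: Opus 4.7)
The plan is to argue via the method of characteristics. Rewriting the continuity equation as
\begin{equation*}
\rho_t+{\bf u}\cdot\nabla\rho=-\rho\,\text{div}\,{\bf u},
\end{equation*}
we see that along a fluid trajectory $\rho$ evolves multiplicatively: once it vanishes at a point of the initial data, it must remain zero on the entire forward trajectory issuing from that point. Because the viscosity in \eqref{1.1} is non--degenerate, there is no mechanism available to inject mass into an initially empty trajectory.

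To implement this I would first build the Lagrangian flow. By Definition \ref{d1}, ${\bf u}\in C([0,T];H^2(\mathbb T^3))\cap L^2(0,T;W^{2,q}(\mathbb T^3))$ with $q\in(3,6]$. Sobolev embedding gives $W^{2,q}(\mathbb T^3)\hookrightarrow C^1(\mathbb T^3)$, so $\|\nabla{\bf u}(\cdot,t)\|_{L^\infty}\in L^2_{\mathrm{loc}}([0,\infty))$, while $C([0,T];H^2)$ yields joint continuity of ${\bf u}$ in $({\bf x},t)$. The Carath\'eodory form of Cauchy--Lipschitz then produces, for every ${\bf x}_0\in\mathbb T^3$, a unique absolutely continuous trajectory $t\mapsto X(t;{\bf x}_0)$ satisfying
\begin{equation*}
\frac{d}{dt}X(t;{\bf x}_0)={\bf u}(X(t;{\bf x}_0),t),\qquad X(0;{\bf x}_0)={\bf x}_0,
\end{equation*}
globally in time. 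Since $\rho\in C([0,T];W^{1,q})$ with $q>3$ is $C^1$ in ${\bf x}$ and the continuity equation holds pointwise a.e., the chain rule along $X(\cdot;{\bf x}_0)$ gives $\frac{d}{dt}\rho(X(t;{\bf x}_0),t)=-\rho\,\text{div}\,{\bf u}$, which integrates to
\begin{equation*}
\rho(X(t;{\bf x}_0),t)=\rho_0({\bf x}_0)\exp\!\left(-\int_0^t\text{div}\,{\bf u}(X(s;{\bf x}_0),s)\,ds\right),
\end{equation*}
the exponential being finite because $\|\text{div}\,{\bf u}(\cdot,s)\|_{L^\infty}\in L^2_{\mathrm{loc}}$.

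To conclude, the hypothesis $\rho_0\in W^{1,q}(\mathbb T^3)$ with $q>3$ and the compactness of $\mathbb T^3$ guarantee that the continuous nonnegative function $\rho_0$ attains its infimum $0$ at some point ${\bf x}_\ast\in\mathbb T^3$. Substituting ${\bf x}_0={\bf x}_\ast$ into the representation formula yields $\rho(X(t;{\bf x}_\ast),t)=0$ for every $t\ge 0$, and since $\rho\ge 0$ everywhere, this forces $\inf_{{\bf x}\in\mathbb T^3}\rho({\bf x},t)=0$ as claimed. The main (mild) technicality is the justification of a pointwise--in--${\bf x}_0$ Lagrangian flow and of the chain rule for $\rho$ along it in the strong--solution framework; the Lipschitz--in--space control on ${\bf u}$ afforded by $L^2(0,T;W^{2,q})$ with $q>3$ and the spatial $C^1$ regularity of $\rho$ are exactly what legitimize these two steps, after which the argument reduces to the one--line observation that zeros of $\rho_0$ are preserved by the multiplicative transport structure of the continuity equation.
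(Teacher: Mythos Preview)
Your proof is correct, with one small technical slip: $W^{1,q}(\mathbb T^3)$ with $q\in(3,6]$ embeds only into $C^{0,1-3/q}$, not $C^1$, so $\rho$ is merely H\"older in ${\bf x}$. This does not damage the argument, since the Lagrangian representation formula can be justified by uniqueness of solutions to the continuity equation with an $L^1_t\mathrm{Lip}_x$ velocity field (which you have from $\nabla{\bf u}\in L^2(0,T;L^\infty)$), rather than by a pointwise chain rule on $\rho$.

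Your route is genuinely different from, and considerably more elementary than, the paper's. The paper argues by contradiction: assuming $\inf_{\bf x}\rho({\bf x},T_7)>0$ for some $T_7$, it invokes Theorem~\ref{2mainth} (applied with initial time $T_7$) to force $\rho\to 1$ in $L^\infty$, locates a time $T_8$ with $\tfrac12\le\rho\le\tfrac32$, and then tracks the auxiliary quantity $H=(2\mu+\lambda)\log\rho+\Delta^{-1}\mathrm{div}(\rho{\bf v})$ along particle trajectories via \eqref{4.4}, using the commutator bounds \eqref{4.5}--\eqref{4.7} and the decay from Theorem~\ref{1mainth} to reach a contradiction. Your argument bypasses the effective viscous flux, the commutator estimates, Lemma~\ref{lemma4.1}, and indeed all of the large--time decay machinery of Sections~3--4; it uses only the regularity in Definition~\ref{d1} and the multiplicative transport structure of \eqref{1.1}$_1$. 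The paper's approach is a natural byproduct of the tools already built for Theorems~\ref{1mainth} and~\ref{2mainth}, but for this particular statement your direct argument is both shorter and more transparent.
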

\smallskip
\begin{Remark}
Theorem \ref{3mainth} implies that the vacuum state will persist
for any time provided that the initial density contains vacuum,
which is different from the
previous work of Li--Li--Xin \cite{LiLiXin}. Indeed, Li--Li--Xin
\cite{LiLiXin} showed that for any global entropy weak solution, any
(possibly existing) vacuum state must vanish within finite time for
the free boundary problem of the 1D isentropic compressible
Navier--Stokes equations with density--dependent viscosity:
\begin{equation}\begin{cases}\nonumber
 \rho_t+ (\rho { u})_x=0,\\
(\rho { u})_t+\text{div}(\rho { u}^2)+\nabla
\rho^\gamma-(\rho^\alpha u_x)_x=0,\end{cases}   \label{1.12-1D}
\end{equation}
where $\alpha>\frac{1}{2}$ and $\gamma\ge 1$.  Therefore, there arises a natural question whether any vacuum state shall vanish within finite time or vacuum state is  preserved for any time for the case $0\le\alpha\le \frac{1}{2}$.
\end{Remark}
\smallskip

\smallskip

\subsection{Outline of ideas}
We make some comments on  the main  ideas of the proof and explain
the main difficulties and techniques involved in the process. \par
The proof of Theorem \ref{1mainth} can be outlined as follows.
Firstly, we prove the exponential decay estimate of
$\|(\rho-1,\sqrt{\rho}({\bf u}-{\bf m}_0))(t)\|_{L^2(\mathbb T^3)}$.
Set
 ${\bf v}={\bf u}-{\bf m_0}$ with ${\bf
m_0}=\int_{\mathbb{T}^3}\rho_0{\bf u}_0({{\bf x}})\mathrm{d}{\bf x}.$  By making the basic energy estimate on the problem
\eqref{1.1}--\eqref{1.2},
one can  derive
 an energy-dissipation inequality of the form
\begin{equation}\label{1.11}\frac{\mathrm{d}}{\mathrm{d}t}\tilde{\mathcal E}(t)+\tilde{\mathcal D}(t)\leq 0,
 \end{equation}
where energy $\tilde{\mathcal E}(t)$ is equivalent to
$\|(\rho-1,\sqrt{\rho}{\bf v})\|_{L^2}^2$, and dissipation
$\tilde{\mathcal D}(t)$ is equivalent to $\|(\rho-1, \nabla
{\bf u}\|_{L^2}^2$. On the other hand, by making full use of
momentum equation and Poincar\'{e}'s inequality, it is clear that
$\|\nabla {\bf u}\|_{L^2}\geq C\|\sqrt{\rho}{\bf v}\|_{L^2}$, this
particularly implies that $\tilde{\mathcal D}(t)\geq
C\tilde{\mathcal E}(t)$. Consequently, the exponential decay
estimate of $\|(\rho-1,\sqrt{\rho}({\bf u}-{\bf
m}_0))(t)\|_{L^2(\mathbb T^3)}$ in Theorem \ref{1mainth} follows
from \eqref{1.11} immediately(See also the  Lyapunov--type energy inequality \eqref{3.15}). Secondly, we derive the exponential
decay estimate of $\|\nabla{\bf u}(t)\|_{L^2(\mathbb T^3)}$. To do
this, we make full use of good properties of the \textit{effective
viscous flux} $F$ to get the energy estimate \eqref{3.24}. To close
the estimate \eqref{3.24}, our main observation is that $\|\nabla
{\bf u}(\cdot, t)\|_{L^2}$ is sufficiently small for any large
enough $t$. With this key observation in hand, we can take a linear
combination of \eqref{3.15} and \eqref{3.24} to get the key
Lyapunov--type energy inequality \eqref{3.30}. Then, \eqref{3.30}
together with Gronwall's inequality implies the exponential decay
estimate of $\|\nabla{\bf u}(t)\|_{L^2(\mathbb T^3)}$ immediately.

To prove Theorem~\ref{2mainth} and Theorem~\ref{3mainth}, the key
ingredient is to establish the time--independent positive lower
bound of $\rho$. To achieve this goal, we will borrow some ideas
from \cite{Desjardins, Lions2} and make some key uniform estimate.
To see this, we first rewrite the mass conservation equation
$\eqref{1.1}_1$ in terms of $\log\rho$ (cf.~\eqref{4.2}). Then, by
defining
$H\overset{\triangle}=(2\mu+\lambda)\log\rho+\Delta^{-1}\text{div}(\rho{\bf
v})$, and fully using the momentum conservation equation
$\eqref{1.1}_2$ and Lagrangian coordinates, it is clear that along the particle trajectories $H$
satisfies \eqref{4.4}. Finally, we exploit some
delicate energy estimates for \eqref{4.4} to get key time--independent
negative lower bound of $H$:
\begin{equation}\nonumber H(t) \geq -C(\textrm{for} \ \textrm{some}\  \textrm{constant}\  C>0)
\end{equation}
holds for any large enough $t$. This together with \eqref{4.9} imply the time--independent
positive lower bound of $\rho$ immediately. The exponential decay estimate for $\|(\rho-1)\|_{L^\infty(\mathbb T^3)}$  is due to the damping mechanism of density. As a by--product, we finally
show that the vacuum states will not vanish for any time provided
that the vacuum states are present initially.

\bigskip

\par
The rest of the paper is organized as follows. In Section 2, we
recall some  elementary  facts and inequalities  that will be used
frequently in later analysis. Section 3 is devoted to proving
Theorem~\ref{1mainth}. We prove  Theorem~\ref{2mainth} and
Theorem~\ref{3mainth} in Section 4.

\bigskip\bigskip

\setcounter{equation}{0}
\section{Preliminaries}
\setcounter{equation}{0}
\smallskip
In this section, we list some  elementary but useful facts and
inequalities which will be used frequently in the sequel.

Set $\varrho\overset{\triangle}=\rho-1$ and   define  the potential energy density $G$  by
 \begin{equation}G(\rho)\overset{\triangle}=\rho\int^\rho_{1}\frac{P(s)-1}{s^2}\mathrm{d}s.\label{2.1}\end{equation}
The following lemma is concerned with the
 estimates about $P(\rho)-1$ and $G(\rho)$, see \cite{Fang}.
 \begin{Lemma}\label{lemma2.2} Let $\gamma>1$ be arbitrary fixed constants. Then we have
 \begin{equation}\nonumber P(\rho)-1\sim \varrho~~\text{and}~~G(\rho)\sim\varrho^2
 \end{equation}
 if $0\le\rho\le M$.
\end{Lemma}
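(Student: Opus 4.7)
The plan is to reduce both equivalences to routine continuity-compactness arguments on the compact interval $[0,M]$, once we have explicit expressions for the relevant functions.

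For the first equivalence $P(\rho)-1\sim\varrho$, I would consider
$$f(\rho):=\frac{\rho^\gamma-1}{\rho-1}\qquad\text{on }[0,M]\setminus\{1\}.$$
Since $s\mapsto s^\gamma$ is strictly increasing for $\gamma>1$, numerator and denominator share the same sign, so $f>0$ everywhere. L'Hopital (or the definition of the derivative of $s^\gamma$ at $s=1$) gives $\lim_{\rho\to1}f(\rho)=\gamma>0$, so $f$ extends continuously to all of $[0,M]$ with $f(1)=\gamma$. The extended $f$ is strictly positive and continuous on the compact set $[0,M]$, hence $0<\min_{[0,M]}f\le\max_{[0,M]}f<\infty$, which is exactly the equivalence $P(\rho)-1\sim\varrho$.

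For the second equivalence $G(\rho)\sim\varrho^2$, I would first evaluate the integral in the definition \eqref{2.1} explicitly: using $\int_1^\rho(s^{\gamma-2}-s^{-2})\,\mathrm{d}s$ one finds
$$G(\rho)=\frac{\rho^\gamma-\gamma\rho+\gamma-1}{\gamma-1}.$$
A direct check gives $G(1)=0$, $G'(1)=0$, and $G''(\rho)=\gamma\rho^{\gamma-2}>0$ for $\rho>0$, so $G$ is strictly convex on $(0,\infty)$ with a unique minimum at $\rho=1$; in particular $G(\rho)>0$ for $\rho\in[0,1)\cup(1,\infty)$ (the endpoint $\rho=0$ being handled by the explicit value $G(0)=1>0$). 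Now define $g(\rho):=G(\rho)/\varrho^2$ for $\rho\neq1$. Taylor's formula at $\rho=1$ together with $G''(1)=\gamma$ yields $g(\rho)\to\gamma/2$ as $\rho\to1$, so $g$ extends continuously to $[0,M]$ with $g(1)=\gamma/2>0$. Compactness plus strict positivity then gives positive upper and lower bounds on $g$, proving $G(\rho)\sim\varrho^2$.

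There is no serious obstacle here; the only points that need a bit of care are the behaviour at the interior point $\rho=1$ (handled by L'Hopital/Taylor, exploiting the double zero of $G$ to cancel the $\varrho^2$) and the left endpoint $\rho=0$ (handled by the explicit formula for $G$). Everything else is continuity of an explicit elementary function on a compact interval.
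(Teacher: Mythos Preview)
Your argument is correct. Both ratios $f(\rho)=(P(\rho)-1)/\varrho$ and $g(\rho)=G(\rho)/\varrho^2$ extend to strictly positive continuous functions on the compact interval $[0,M]$, and the explicit formula $G(\rho)=(\rho^\gamma-\gamma\rho+\gamma-1)/(\gamma-1)$ together with the Taylor expansion at $\rho=1$ handles the two delicate points (the removable singularity at $\rho=1$ and the improper integral in the definition of $G$ as $\rho\to0^+$) exactly as you describe.

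The paper does not actually prove this lemma; it simply states the result and refers the reader to \cite{Fang}. Your self-contained continuity-compactness argument is therefore a welcome addition rather than a duplication, and it has the advantage of making the dependence of the implicit constants on $\gamma$ and $M$ completely transparent (e.g., $f(1)=\gamma$, $g(1)=\gamma/2$, $f(0)=g(0)=1$).
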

\smallskip

 In virtue of \eqref{1.1}$_2$, one has
\begin{equation}\Delta F=\text{div}(\rho\dot{{\bf u}})\ \ \text{and} \ \ \mu\Delta  {\bf w}
=\mathcal P(\rho\dot {\bf u}),\label{2.2}\end{equation}
where $``\ \dot\ \ "$ denotes the material derivative which is defined by
$$\dot f\overset{\text{def}}=\partial_tf+{\bf u}\cdot\nabla f.$$
Applying the standard $L^p$--estimates of  elliptic systems to \eqref{2.2}, we have the following estimates.
\begin{Lemma}\label{lemma2.3} Let $(\rho,{\bf u})$ be a strong solution  to the problem \eqref{1.1}--\eqref{1.2}. Then for any $p\in(1,\infty)$, there exists a generic positive constant $C$ which
depends only on $\mu$, $\lambda$ and $p$ such that
\begin{equation}\|\nabla F\|_{L^p(\mathbb T^{3})}+\|\nabla^2{\bf w}\|_{L^p(\mathbb T^{3})}\leq C\|\rho\dot {\bf u}\|_{L^p(\mathbb T^{3})},\label{2.3}\end{equation}
and
\begin{equation}\|\nabla {\bf u}\|_{L^p(\mathbb T^{3})}\leq C\left(\|F\|_{L^p(\mathbb T^{3})}+\|\nabla {\bf w}\|_{L^p(\mathbb T^{3})}+\|(P(\rho)-1)\|_{L^p(\mathbb T^{3})}\right).\label{2.4}\end{equation}
\end{Lemma}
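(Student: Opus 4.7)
The plan is to reduce both estimates to the boundedness of Calderón--Zygmund operators on $L^p$ for $1<p<\infty$, using the elliptic identities \eqref{2.2} together with the Helmholtz decomposition induced by the projection $\mathcal P$.

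For \eqref{2.3}, I would observe that since $\Delta F=\mathrm{div}(\rho\dot{\bf u})$ on the torus, the gradient of $F$ equals $\nabla(-\Delta)^{-1}\mathrm{div}(\rho\dot{\bf u})$ modulo a harmonic term; on $\mathbb T^3$, by restricting to the zero-mean part (which does not affect $\nabla F$), this operator $\nabla(-\Delta)^{-1}\mathrm{div}$ is a classical composition of Riesz transforms and therefore bounded on $L^p(\mathbb T^3)$ for $1<p<\infty$. This immediately yields $\|\nabla F\|_{L^p}\le C\|\rho\dot{\bf u}\|_{L^p}$. Similarly, from $\mu\Delta{\bf w}=\mathcal P(\rho\dot{\bf u})$ and the boundedness of $\nabla^2(-\Delta)^{-1}$ and of $\mathcal P$ itself on $L^p$, I would conclude $\|\nabla^2{\bf w}\|_{L^p}\le C\|\rho\dot{\bf u}\|_{L^p}$. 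Adding the two estimates gives \eqref{2.3}.

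For \eqref{2.4}, the key is the Helmholtz decomposition ${\bf u}={\bf w}+({\bf u}-{\bf w})$, where by the definition of $\mathcal P$ the curl-free part satisfies ${\bf u}-{\bf w}=-\nabla(-\Delta)^{-1}\mathrm{div}\,{\bf u}$. Taking a gradient and again invoking $L^p$-boundedness of Riesz-type operators, I would get
\begin{equation}\nonumber
\|\nabla({\bf u}-{\bf w})\|_{L^p}\le C\|\mathrm{div}\,{\bf u}\|_{L^p}.
\end{equation}
Then I would solve for $\mathrm{div}\,{\bf u}$ from the very definition \eqref{1.5} of the effective viscous flux, namely $\mathrm{div}\,{\bf u}=(2\mu+\lambda)^{-1}(F+(P(\rho)-1))$, giving
\begin{equation}\nonumber
\|\mathrm{div}\,{\bf u}\|_{L^p}\le C\bigl(\|F\|_{L^p}+\|P(\rho)-1\|_{L^p}\bigr).
\end{equation}
Combining these with the triangle inequality $\|\nabla{\bf u}\|_{L^p}\le\|\nabla{\bf w}\|_{L^p}+\|\nabla({\bf u}-{\bf w})\|_{L^p}$ produces \eqref{2.4}.

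There is essentially no analytical obstacle — the two estimates are standard consequences of elliptic regularity once \eqref{2.2} is in place. The only point that needs a little care is the handling of constants/mean values on the compact manifold $\mathbb T^3$: neither $F$ nor $\mathbf u$ need have zero spatial mean, but since \eqref{2.3} involves only $\nabla F$ and $\nabla^2{\bf w}$, and \eqref{2.4} involves only $\nabla{\bf u}$, the nonzero constants in the Fourier zero mode drop out and the Calderón--Zygmund bounds apply to the oscillatory parts. This is the only subtlety I would explicitly flag in the write-up; everything else is a direct application of standard $L^p$-theory for the Laplacian together with the $L^p$-continuity of the Helmholtz projection $\mathcal P$.
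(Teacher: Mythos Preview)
Your proposal is correct and follows essentially the same approach as the paper: both arguments invoke Calder\'on--Zygmund/Marcinkiewicz multiplier bounds applied to the elliptic identities \eqref{2.2}, and for \eqref{2.4} both split $\nabla{\bf u}$ into a divergence part controlled by $\mathrm{div}\,{\bf u}=(2\mu+\lambda)^{-1}(F+P(\rho)-1)$ and a solenoidal part controlled by $\nabla{\bf w}$. The paper phrases the latter decomposition via the vector identity $-\Delta{\bf u}=-\nabla\mathrm{div}\,{\bf u}+\nabla\times\nabla\times{\bf w}$ rather than your Helmholtz-projection formulation, but this is the same Hodge decomposition written two ways.
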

\begin{proof}Applying  the standard $L^p$--estimate of elliptic
systems to \eqref{2.2}, \eqref{2.3} follows immediately. Noticing
that $-\Delta {\bf u}=-\nabla \text{div} {\bf
u}+\nabla\times\nabla\times {\bf w}$, one has
$$\nabla {\bf u}=\nabla\Delta^{-1}\nabla \text{div} {\bf u}-\nabla\Delta^{-1}\nabla\times\nabla\times{\bf w},$$
where $\Delta^{-1}$ denotes the inverse Laplacian with zero mean
value on $\mathbb T^3$. Thus, it follows  the Marcinkiewicz
multiplier theorem (see \cite{Stein})  that
\begin{equation}\nonumber\begin {split}\|\nabla {\bf u}\|_{L^p(\mathbb T^{3})}&\leq C(\|\text{div} {\bf u}\|_{L^p(\mathbb T^{3})}+\|\nabla{\bf w}\|_{L^p(\mathbb T^{3})})
\\&\leq C(\|F\|_{L^p(\mathbb T^{3})}+\|\nabla{\bf w}\|_{L^p(\mathbb T^{3})}+\|(P(\rho)-1)\|_{L^p(\mathbb T^{3})})\end{split}\end{equation}
as claimed in \eqref{2.4}. The proof the lemma is completed.
\end{proof}

\bigskip\bigskip

\setcounter{equation}{0}
\section{Proof of Theorem \ref{1mainth}}
\setcounter{equation}{0}
\smallskip
In this section, we devote ourselves to proving Theorem
\ref{1mainth}. In order to deduce the a priori estimate, in what
follows, we will give some energy estimates. Then, Theorem
\ref{1mainth} is an easy consequence of Lemma \ref{lemma3.1} and
Lemma \ref{lemma3.2}.\par The first lemma is concerned with the
time--decay rate of $\|(\varrho,\sqrt{\rho}({\bf u}-{\bf
m}_0))\|_{L^2(\mathbb T^3)}$.
 \begin{Lemma}\label{lemma3.1}  Under the assumptions of Theorem \ref{1mainth}, there exist two positive constants $C_3>0$ and $\eta_3>0$,
 which are dependent on $M$ and $K$, but independent of $t$, such that
\begin{equation}\label{3.1}\|\varrho(\cdot,t)\|_{L^2(\mathbb T^3)}+\|\sqrt{\rho}({\bf u}-{\bf m}_0)(\cdot,t)\|_{L^2(\mathbb T^3)}
\leq C_3\text{e}^{-\eta_3 t},\end{equation} for any $t\geq 0$.
\end{Lemma}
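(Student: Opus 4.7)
The plan is to establish an energy--dissipation inequality of Lyapunov type, then close it by bounding $\|\sqrt{\rho}{\bf v}\|_{L^2}$ and $\|\varrho\|_{L^2}$ (with ${\bf v}={\bf u}-{\bf m}_0$) by the viscous dissipation $\|\nabla {\bf u}\|_{L^2}$.

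First I would derive the basic energy identity for the centered velocity ${\bf v}={\bf u}-{\bf m}_0$. Multiplying $\eqref{1.1}_2$ by ${\bf u}$ and combining with the standard identity $\frac{d}{dt}\int G(\rho)\,\mathrm{d}{\bf x}+\int (P(\rho)-1)\,\mathrm{div}\,{\bf u}\,\mathrm{d}{\bf x}=0$ coming from the mass equation, one gets
\begin{equation*}
\tfrac{d}{dt}\!\int\!\bigl[\tfrac12\rho|{\bf u}|^2+G(\rho)\bigr]\mathrm{d}{\bf x}+\mu\|\nabla{\bf u}\|_{L^2}^2+(\mu+\lambda)\|\mathrm{div}\,{\bf u}\|_{L^2}^2=0.
\end{equation*}
Because momentum and mass are conserved on $\mathbb T^3$, we have $\int\rho{\bf v}\,\mathrm{d}{\bf x}=\int\rho_0{\bf u}_0\,\mathrm{d}{\bf x}-{\bf m}_0\int\rho\,\mathrm{d}{\bf x}=0$ and $\int\rho|{\bf u}|^2=\int\rho|{\bf v}|^2+|{\bf m}_0|^2$, so the identity rewrites in terms of ${\bf v}$. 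Invoking Lemma~\ref{lemma2.2} then yields a basic energy $E_1(t)\sim \|\sqrt{\rho}\,{\bf v}\|_{L^2}^2+\|\varrho\|_{L^2}^2$ satisfying $\frac{d}{dt}E_1(t)+c\|\nabla{\bf u}\|_{L^2}^2\le 0$.

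The dissipation only controls $\|\nabla{\bf u}\|_{L^2}$, so next I would recover $\|\varrho\|_{L^2}^2$ via a Bogovskii--type corrector, following the spirit of Fang--Zhang--Zi and the references in the introduction. Since $\int(\rho-1)\,\mathrm{d}{\bf x}=0$, one can solve $\mathrm{div}\,\Phi=\varrho$ on $\mathbb T^3$ with $\int\Phi=0$ and $\|\Phi\|_{H^1}\le C\|\varrho\|_{L^2}$. Testing $\eqref{1.1}_2$ against $\Phi$ and integrating by parts,
\begin{equation*}
\int(P(\rho)-1)\varrho\,\mathrm{d}{\bf x}=-\tfrac{d}{dt}\!\int\rho{\bf u}\cdot\Phi\,\mathrm{d}{\bf x}+\!\int\rho{\bf u}\cdot\Phi_t\,\mathrm{d}{\bf x}+\!\int\rho{\bf u}\otimes{\bf u}:\nabla\Phi\,\mathrm{d}{\bf x}+R({\bf u},\Phi),
\end{equation*}
where $R$ collects the viscous terms, bounded by $C\|\nabla{\bf u}\|_{L^2}\|\varrho\|_{L^2}$; $\Phi_t$ is controlled via $\mathrm{div}\,\Phi_t=-\mathrm{div}(\rho{\bf u})$ so that $\|\Phi_t\|_{L^2}\le C\sqrt{M}\|\sqrt{\rho}\,{\bf u}\|_{L^2}$. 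Using Lemma~\ref{lemma2.2}, $\sup_t\|\rho\|_{L^\infty}\le M$, Cauchy--Schwarz and Young, the left side dominates $c\|\varrho\|_{L^2}^2$ modulo a perfect time derivative $\frac{d}{dt}\int\rho{\bf u}\cdot\Phi\,\mathrm{d}{\bf x}$ and a small multiple of $\|\sqrt{\rho}\,{\bf v}\|_{L^2}^2+\|\nabla{\bf u}\|_{L^2}^2$.

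To close the scheme I would add a small multiple of this corrector identity to the basic energy identity, obtaining a modified functional $\tilde{\mathcal E}(t)\sim \|\sqrt{\rho}\,{\bf v}\|_{L^2}^2+\|\varrho\|_{L^2}^2$ with
\begin{equation*}
\tfrac{d}{dt}\tilde{\mathcal E}(t)+\tilde{\mathcal D}(t)\le 0,\qquad \tilde{\mathcal D}(t)\sim \|\nabla{\bf u}\|_{L^2}^2+\|\varrho\|_{L^2}^2.
\end{equation*}
It remains to prove $\tilde{\mathcal D}(t)\ge c\,\tilde{\mathcal E}(t)$. The pressure part is already present; for the kinetic part I would use the weighted Poincar\'e inequality with $\int\rho{\bf v}\,\mathrm{d}{\bf x}=0$: writing $\bar{{\bf v}}=\fint{\bf v}$ and expanding $\int\rho|{\bf v}|^2=\int\rho|{\bf v}-\bar{{\bf v}}|^2-|\bar{{\bf v}}|^2$, the bound $\rho\le M$ and classical Poincar\'e give $\|\sqrt{\rho}\,{\bf v}\|_{L^2}^2\le CM\|\nabla{\bf v}\|_{L^2}^2=CM\|\nabla{\bf u}\|_{L^2}^2$. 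Hence $\tilde{\mathcal D}(t)\ge c\,\tilde{\mathcal E}(t)$, and Gr\"onwall's inequality yields the exponential decay \eqref{3.1}.

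The main obstacle is the corrector step: without a positive lower bound on $\rho$, the time derivative $\Phi_t$ must be controlled only in terms of $\sqrt{\rho}\,{\bf u}$ and the upper bound $M$, and the small--parameter balancing in the linear combination $\tilde{\mathcal E}=E_1+\varepsilon\!\int\rho{\bf u}\cdot\Phi$ has to be done so that $\tilde{\mathcal E}$ stays comparable to $\|\sqrt{\rho}\,{\bf v}\|_{L^2}^2+\|\varrho\|_{L^2}^2$ uniformly in time; this is where Lemma~\ref{lemma2.2} and the vanishing of $\int\rho{\bf v}$ are both essential.
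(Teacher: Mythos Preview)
Your proposal is correct and follows essentially the same strategy as the paper: basic energy identity for $(\sqrt{\rho}\,{\bf v},\varrho)$, a corrector term extracted from the momentum equation to recover the dissipation of $\|\varrho\|_{L^2}^2$, a Poincar\'e--type bound $\|{\bf v}\|_{L^r}\le C\|\nabla{\bf u}\|_{L^2}$ based on $\int\rho{\bf v}=0$, and a Lyapunov linear combination closed by Gr\"onwall. The only cosmetic difference is that you phrase the corrector via a Bogovskii field $\Phi$ with $\mathrm{div}\,\Phi=\varrho$ tested against the momentum equation, whereas the paper applies $\Delta^{-1}\mathrm{div}$ to the momentum equation (rewritten in ${\bf v}$) and tests against $\varrho$; on $\mathbb T^3$ these are dual formulations of the same identity (take $\Phi=\nabla\Delta^{-1}\varrho$), and using ${\bf v}$ rather than ${\bf u}$ in the corrector, as the paper does, is what cleanly eliminates the ${\bf m}_0\otimes{\bf m}_0$ contributions you would otherwise have to track.
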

\begin{proof} We split the proof into three steps.

\texttt{Step 1.  $L^2$ estimate of $(\varrho,{\bf v})$.} Recalling ${\bf v}={\bf u}-{\bf m_0}$, multiplying
the momentum conservation equation \eqref{1.1}$_2$ by ${\bf u}$, and
then integrating the resultant equation over $\mathbb T^3$, we have
from integration by parts that
\begin{equation}\frac{\mathrm{d}}{\mathrm{d}t}\int_{\mathbb T^3}\frac{1}{2}\rho|{\bf u}|^2\mathrm{d}{\bf x}+\int_{\mathbb T^3}\nabla P(\rho){\bf u}\mathrm{d}{\bf x}+\int_{\mathbb T^3}\mu|\nabla
{\bf u}|^2+(\lambda+\mu)|\text{div}{\bf u}|^2\mathrm{d}{\bf
x}=0.\label{3.2}\end{equation} It follows from mass conservation
equation \eqref{1.1}$_1$ and the definition of $G(\rho)$ in
\eqref{2.1} that
$$\left(G(\rho)\right)_t+\text{div}(G(\rho){\bf u})+(P(\rho)- 1)\text{div}{\bf u}=0.$$
Integrating the above equation over $\mathbb T^3$ and then adding the resulting equality to \eqref{3.2}, one has
\begin{equation}\label{3.3}\frac{\mathrm{d}}{\mathrm{d}t}\int_{\mathbb T^3}\frac{1}{2}\rho|{\bf u}|^2+G(\rho)\mathrm{d}{\bf x}+\int_{\mathbb T^3}\mu|\nabla
{\bf u}|^2+(\mu+\lambda)|\text{div}{\bf u}|^2\mathrm{d}{\bf
x}=0.\end{equation} Noticing that
\begin{equation}\label{3.4}\int_{\mathbb T^3}\rho\mathrm{d}{\bf x}=1,~~\text{and}~~\int_{\mathbb T^3}\rho{\bf u}\mathrm{d}{\bf x}={\bf
m}_0,
\end{equation}
we have
\begin{equation}\nonumber\begin{split}\int_{\mathbb T^3}\rho|{\bf v}|^2\mathrm{d}{\bf x}=&\int_{\mathbb T^3}\rho|({\bf u}-{\bf m}_0)|^2\mathrm{d}{\bf x}\\=&\int_{\mathbb T^3}\rho|{\bf u}|^2\mathrm{d}{\bf x}-2\int_{\mathbb T^3}\rho{\bf u}\cdot{\bf m}_0\mathrm{d}{\bf x}+\int_{\mathbb T^3}\rho|{\bf m}_0|^2\mathrm{d}{\bf x}\\=&\int_{\mathbb T^3}\rho|{\bf u}|^2\mathrm{d}{\bf x}-|{\bf m}_0|^2.
\end{split}\end{equation}
Therefore, the equality \eqref{3.3} can be rewritten as follows
\begin{equation}\label{3.5}\frac{\mathrm{d}}{\mathrm{d}t}\int_{\mathbb T^3}\frac{1}{2}\rho|{\bf v}|^2+G(\rho)\mathrm{d}{\bf x}+\int_{\mathbb T^3}\mu|\nabla
{\bf u}|^2+(\mu+\lambda)|\text{div}{\bf u}|^2\mathrm{d}{\bf
x}=0.\end{equation} \texttt{Step 2: Dissipation of $\varrho$.} From
\eqref{1.1}$_1$ and \eqref{1.1}$_2$, we have
\begin{equation}\label{3.6}\begin{split}& (\rho{\bf v})_t+\text{div}(\rho{\bf v}\otimes{\bf v})+\text{div}(\rho{\bf m}_0\otimes{\bf v})+\nabla(P(\rho)-1)=\mu\Delta {\bf v}+(\mu+\lambda) \nabla \hbox{div} {\bf v}.
\end{split}\end{equation}
Applying the operator $\Delta^{-1}\text{div}$ to \eqref{3.6}, one
has
\begin{equation}\label{3.7}  P(\rho)-1=-\partial_t\Delta^{-1}\text{div}(\rho{\bf v})+(2\mu+\lambda)\text{div}{\bf v}-{\mathcal R}_i{\mathcal R}_j(\rho v^iv^j)-{\mathcal R}_i{\mathcal R}_j(\rho m_0^iv^j),
\end{equation}
where ${\mathcal R}_i=-(-\Delta)^{-1/2}\partial_{x_i}$ is the usual Riesz transform on $\mathbb
T^3$. To achieve
the dissipation on $\varrho$, we take the $L^2$ inner product of the
equation \eqref{3.7} with $\varrho$ to get that
\begin{equation}\label{3.8}\begin{split}&\int_{\mathbb T^3} (P(\rho)-1)\varrho\mathrm{d}{\bf x}\\=&-\int_{\mathbb T^3}\partial_t\left[\Delta^{-1}\text{div}(\rho{\bf v})\right]\varrho\mathrm{d}{\bf x}+(2\mu+\lambda)\int_{\mathbb T^3}\text{div}{\bf v}\varrho\mathrm{d}{\bf x}\\&-\int_{\mathbb T^3}{\mathcal R}_i{\mathcal R}_j(\rho v^iv^j)\varrho\mathrm{d}{\bf x}-\int_{\mathbb T^3}{\mathcal R}_i{\mathcal R}_j(\rho m_0^iv^j)\varrho\mathrm{d}{\bf x}\\ \overset{\triangle}=&~I_{11}+I_{12}+I_{13}+I_{14}.
\end{split}\end{equation}
For the  term in the left--side of  \eqref{3.8}, it follows from
Lemma \ref{lemma2.2} that there exists a positive constant $C_4$
such that
\begin{equation}\label{3.9}\int_{\mathbb T^3} (P(\rho)-1)\varrho\mathrm{d}{\bf x}\mathrm{d}{\bf x}\ge C_4 \| \varrho\|^2_{L^2(\mathbb T^3)}.\end{equation}
We turn to estimate each term on the right--side of \eqref{3.8}. For
the term $I_{11}$, it follows from $\eqref{1.1}_1$, \eqref{1.7},
integration by parts, Parseval's theorem, Marcinkiewicz multiplier
theorem and Young's inequality  that
\begin{equation}\label{3.10}\begin{split}I_{11}=&\int_{\mathbb T^3}\partial_t
\left[(-\Delta)^{-\frac{1}{2}}\text{div}(\rho{\bf v})\right](-\Delta)^{-\frac{1}{2}}\varrho\mathrm{d}{\bf x}
\\=&\frac{\mathrm{d}}{\mathrm{d}t}\int_{\mathbb T^3}(-\Delta)^{-\frac{1}{2}}\text{div}(\rho{\bf v})(-\Delta)^
{-\frac{1}{2}}\varrho\mathrm{d}{\bf x}-\int_{\mathbb T^3}(-\Delta)^{-\frac{1}{2}}\text{div}(\rho{\bf v})(-\Delta)^{-\frac{1}{2}}
\varrho_t\mathrm{d}{\bf x}\\=&\frac{\mathrm{d}}{\mathrm{d}t}\int_{\mathbb T^3}(-\Delta)^{-\frac{1}{2}}\text{div}(\rho{\bf v})
(-\Delta)^{-\frac{1}{2}}\varrho\mathrm{d}{\bf x}+\int_{\mathbb T^3}(-\Delta)^{-\frac{1}{2}}\text{div}(\rho{\bf v})
(-\Delta)^{-\frac{1}{2}}\text{div}(\rho{\bf u})\mathrm{d}{\bf x}\\=&\frac{\mathrm{d}}
{\mathrm{d}t}\int_{\mathbb T^3}(-\Delta)^{-\frac{1}{2}}\text{div}(\rho{\bf v})
(-\Delta)^{-\frac{1}{2}}\varrho\mathrm{d}{\bf x}+\int_{\mathbb T^3}\big{|}
(-\Delta)^{-\frac{1}{2}}\text{div}(\rho{\bf v})\big{|}^2\mathrm{d}{\bf x}\\&+
\int_{\mathbb T^3}\big[(-\Delta)^{-\frac{1}{2}}\text{div}(\rho{\bf v})\big]
\big[(-\Delta)^{-\frac{1}{2}}\text{div}(\varrho{\bf m}_0)\big]\mathrm{d}{\bf x}\\
\leq &\frac{\mathrm{d}}{\mathrm{d}t}\int_{\mathbb
T^3}(-\Delta)^{-\frac{1}{2}}\text{div} (\rho{\bf
v})(-\Delta)^{-\frac{1}{2}}\varrho\mathrm{d}{\bf
x}+C\left(\|{\rho}{\bf v}\|_{L^2(\mathbb T^3)}^2 +\|{\rho}{\bf
v}\|_{L^2(\mathbb T^3)}\|\varrho\|_{L^2(\mathbb T^3)}\right)\\ \leq
&-\frac{\mathrm{d}}{\mathrm{d}t} \int_{\mathbb
T^3}\Delta^{-1}\text{div}(\rho{\bf v})\varrho\mathrm{d}{\bf
x}+C\|{\bf v}\|_{L^2(\mathbb
T^3)}^2+\frac{C_4}{6}\|\varrho\|_{L^2(\mathbb T^3)}^2.
\end{split}\end{equation}
 Using Young's inequality, the term $I_{12}$  is controlled as
\begin{equation}\label{3.11}I_{12}\leq C\|{\nabla \bf u}\|_{L^2(\mathbb T^3)}^2+\frac{C_4}{6}\|\varrho\|_{L^2(\mathbb T^3)}^2.
\end{equation}
From \eqref{1.7}, Marcinkiewicz multiplier theorem and Young's
inequality,  the last two terms $I_{13}$ and $I_{14}$ can be bounded
as
\begin{equation}\label{3.12}\begin{split}I_{13}+I_{14}\leq &~C\left(\|{\mathcal R}_i{\mathcal R}_j(\rho v^iv^j)\|_{L^2(\mathbb T^3)}\|\varrho\|_{L^2(\mathbb T^3)}+\|{\mathcal R}_i{\mathcal R}_j(\rho m_0^iv^j)\|_{L^2(\mathbb T^3)}\|\varrho\|_{L^2(\mathbb T^3)}\right)\\ \leq&~ C\left(\|\rho v^iv^j\|_{L^2(\mathbb T^3)}\|\varrho\|_{L^2(\mathbb T^3)}+\|\rho m_0^iv^j\|_{L^2(\mathbb T^3)}\|\varrho\|_{L^2(\mathbb T^3)}\right)\\ \leq&~ C\left(\|\rho\|_{L^\infty(\mathbb T^3)} \|{\bf v}\|_{L^4(\mathbb T^3)}^2\|\varrho\|_{L^2(\mathbb T^3)}+\|\rho|{\bf m}_0|\|_{L^\infty(\mathbb T^3)}\|{\bf v}\|_{L^2(\mathbb T^3)}\|\varrho\|_{L^2(\mathbb T^3)}\right)\\ \leq&~ C\left(\|{\bf v}\|_{L^4(\mathbb T^3)}^2+\|{\bf v}\|_{L^2(\mathbb T^3)}^2\right)+\frac{C_3}{6}\|\varrho\|_{L^2(\mathbb T^3)}^2.
\end{split}\end{equation}
Substituting \eqref{3.9}--\eqref{3.12} into \eqref{3.8}, we obtain
\begin{equation}\label{3.13}\frac{\mathrm{d}}{\mathrm{d}t}\int_{\mathbb T^3}\Delta^{-1}\text{div}(\rho{\bf v})\varrho\mathrm{d}{\bf x}+\frac{C_4}{2}\|\varrho\|_{L^2(\mathbb T^3)}^2\leq C\left(\|{\bf v}\|_{L^4(\mathbb T^3)}^2+\|{\bf v}\|_{L^2(\mathbb T^3)}^2+\|{\nabla \bf u}\|_{L^2(\mathbb T^3)}^2\right).
\end{equation}
\texttt{Step 3: Closing the estimates.} We choose a positive  constant $D_1$ suitably large and define  the temporal energy functional
\begin{equation}\nonumber \mathcal M_1(t)=D_1\left(\int_{\mathbb T^3}\frac{1}{2}\rho|{\bf v}|^2+G(\rho)\mathrm{d}{\bf x}\right)+\int_{\mathbb T^3}
\Delta^{-1}\text{div}(\rho{\bf v})\varrho\mathrm{d}{\bf x},
\end{equation}
for any $t\ge 0$. By virtue of \eqref{1.7}, Lemma \ref{lemma2.2},
H\"older's inequality and Marcinkiewicz's multiplier theorem, we
have
\begin{equation}\nonumber\begin{split}&\Big{|}\int_{\mathbb T^3}\Delta^{-1}\text{div}(\rho{\bf v})\varrho\mathrm{d}{\bf x}\Big{|}\\ \leq &~\|\Delta^{-1}\text{div}(\rho{\bf v})\|_{L^6(\mathbb T^3)}\|\varrho\|_{L^{\frac{6}{5}}(\mathbb T^3)}\\ \leq &~C\|\rho{\bf v}\|_{L^2(\mathbb T^3)}\|\varrho\|_{L^2(\mathbb T^3)}\\ \leq &~ C\left(\int_{\mathbb T^3}\frac{1}{2}\rho|{\bf v}|^2+G(\rho)\mathrm{d}{\bf x}\right).
\end{split}\end{equation}
Thus, $\mathcal M_1(t)$ is  equivalent to
$\|(\varrho,\sqrt{\rho}{\bf v})(t)\|^2_{L^2(\mathbb T^3)}$ if we
choose  $D_1$  large enough.

From \eqref{1.7}, Minkowski's inequality, H\"older's inequality and
Poincar\'e's inequality, we obtain
\begin{equation}\label{3.14}\begin{split}\|{\bf v}\|_{L^r({\mathbb T^3})}\leq &~\|{\bf u}-\bar{\bf u}\|_{L^r({\mathbb T^3})}+|\bar {\bf u}- {\bf m}_0|\\ \leq &~
\|{\bf u}-\bar{\bf u}\|_{L^r({\mathbb T^3})}+\Big{|}\int_{\mathbb T^3}\left(\rho\bar{\bf u}-\rho{\bf u}\right)\mathrm{d}{\bf x}\Big{|}\\ \leq &~
C\|{\bf u}-\bar{\bf u}\|_{L^r({\mathbb T^3})}\\ \leq &~
\begin{cases} C   \|\nabla{\bf u}\|_{L^2({\mathbb T^3})},\  \text{when}\  1\leq r\leq 6,\\
C   \|\nabla{\bf u}\|_{L^3({\mathbb T^3})},\  \text{when}\  1\leq r<\infty.
\end{cases}
\end{split}\end{equation}
Taking a linear combination of \eqref{3.5} and \eqref{3.13} and using \eqref{3.14}, we obtain
\begin{equation}\label{3.15}\frac{\mathrm{d}}{\mathrm{d}t}\mathcal M_1(t)+\frac{\mathcal M_1(t)}{D_1}+\frac{\|\nabla{\bf u}(t)\|^2
_{L^2({\mathbb T^3})}}{D_1}\leq 0,\end{equation} for any $t\ge 0$.
Integrating the above inequality with respect to $t$ over $[0, t]$, \eqref{3.1}
follows immediately. The proof of lemma is completed.
\end{proof}

In the following lemma, we derive the time--decay rate of
$\|\nabla{\bf u}\|_{L^2(\mathbb T^3)}$. The main observation here is
that $\|\nabla{\bf u}(t)\|_{L^2(\mathbb T^3)}$ is sufficiently small
for any large enough $t$.
 \begin{Lemma}\label{lemma3.2} Under the assumptions of Theorem \ref{1mainth}, there exist two positive constants $C_5>0$ and $\eta_4>0$,
 which are dependent on $M$ and $K$, but independent of $t$, such that
\begin{equation}\label{3.16}\|\nabla{\bf u}\|_{L^2(\mathbb T^3)}\leq C_5\text{e}^{-\eta_4 t},\end{equation}
for any $t\ge 0$.\end{Lemma}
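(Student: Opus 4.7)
The plan is to supplement the $L^2$--level inequality \eqref{3.15} with a first--order energy identity controlling $\|\nabla{\bf u}\|_{L^2}^2$, close it via the smallness that follows from Lemma \ref{lemma3.1}, and then read off the exponential decay from a Lyapunov argument.

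\smallskip

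\textbf{Step 1: a first--order energy identity.} Multiplying the momentum equation $\eqref{1.1}_2$ by ${\bf u}_t$, integrating over $\mathbb T^3$ and integrating by parts, the viscous terms produce $\tfrac{\rm d}{{\rm d}t}\bigl(\tfrac{\mu}{2}\|\nabla{\bf u}\|_{L^2}^2+\tfrac{\mu+\lambda}{2}\|{\rm div}\,{\bf u}\|_{L^2}^2\bigr)$, while the pressure term is handled by writing $-\int(P(\rho)-1)\,{\rm div}\,{\bf u}_t\,{\rm d}{\bf x}=-\tfrac{\rm d}{{\rm d}t}\int(P(\rho)-1){\rm div}\,{\bf u}\,{\rm d}{\bf x}+\int(P(\rho)-1)_t\,{\rm div}\,{\bf u}\,{\rm d}{\bf x}$ and using $\eqref{1.1}_1$ to eliminate the time derivative of $P(\rho)-1$. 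After absorbing the convective term $\int\rho({\bf u}\cdot\nabla){\bf u}\cdot{\bf u}_t\,{\rm d}{\bf x}$ into $\tfrac14\|\sqrt{\rho}{\bf u}_t\|_{L^2}^2$ plus $C\int\rho|{\bf u}|^2|\nabla{\bf u}|^2\,{\rm d}{\bf x}$, one arrives at the inequality \eqref{3.24} the authors refer to, roughly of the form
\begin{equation*}
\frac{\rm d}{{\rm d}t}\Phi(t)+\frac12\|\sqrt{\rho}{\bf u}_t\|_{L^2}^2\le C\bigl(\|\nabla{\bf u}\|_{L^2}^2+\|\nabla{\bf u}\|_{L^4}^4+\|{\bf v}\|_{L^6}^2\|\nabla{\bf u}\|_{L^3}^2+\|\varrho\|_{L^2}^2\bigr),
\end{equation*}
where $\Phi(t)\sim \|\nabla{\bf u}(t)\|_{L^2}^2$ up to lower order contributions bounded by $\mathcal M_1(t)$.

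\smallskip

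\textbf{Step 2: nonlinear terms via the effective viscous flux.} The cubic/quartic terms on the right--hand side are controlled by the elliptic regularity of Lemma \ref{lemma2.3} and the Sobolev embeddings on $\mathbb T^3$. Writing $\nabla{\bf u}=\nabla\Delta^{-1}\nabla\,{\rm div}\,{\bf u}-\nabla\Delta^{-1}\nabla\times\nabla\times{\bf w}$ and using \eqref{2.3}--\eqref{2.4}, we obtain
\begin{equation*}
\|\nabla{\bf u}\|_{L^p}\le C\bigl(\|F\|_{L^p}+\|\nabla{\bf w}\|_{L^p}+\|\varrho\|_{L^p}\bigr),\qquad p\in(1,\infty),
\end{equation*}
so that, by Gagliardo--Nirenberg, $\|\nabla{\bf u}\|_{L^4}^4\le C\|\nabla{\bf u}\|_{L^2}^{\,}\bigl(\|\sqrt{\rho}{\bf u}_t\|_{L^2}^3+\|\nabla{\bf u}\|_{L^2}^3+\|\varrho\|_{L^2}^3\bigr)$ after using \eqref{1.7} to bound $\|\rho\dot{\bf u}\|_{L^2}$. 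The term $\|{\bf v}\|_{L^6}^2\|\nabla{\bf u}\|_{L^3}^2$ is treated the same way via \eqref{3.14}. The upshot is that every nonlinear contribution carries an explicit factor of $\|\nabla{\bf u}\|_{L^2}$.

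\smallskip

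\textbf{Step 3: the key smallness observation.} Integrating \eqref{3.15} gives $\int_0^\infty\|\nabla{\bf u}(s)\|_{L^2}^2\,{\rm d}s<+\infty$. Since Step 1 also provides a uniform bound on $\bigl|\tfrac{\rm d}{{\rm d}t}\|\nabla{\bf u}\|_{L^2}^2\bigr|$ on any finite time interval, a standard continuity argument shows that there exists $T_\star>0$, depending only on $M$ and $K$, such that
\begin{equation*}
\|\nabla{\bf u}(\cdot,t)\|_{L^2(\mathbb T^3)}\le\varepsilon_0\qquad\text{for all }t\ge T_\star,
\end{equation*}
with $\varepsilon_0$ as small as we wish. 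This is the critical input that lets Step 2 absorb every nonlinearity into the dissipation $\tfrac12\|\sqrt{\rho}{\bf u}_t\|_{L^2}^2$.

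\smallskip

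\textbf{Step 4: Lyapunov functional and Gronwall.} For $t\ge T_\star$, choose a large constant $D_2>0$ and define $\mathcal M_2(t)\overset{\triangle}=D_2\mathcal M_1(t)+\Phi(t)$. Combining $D_2\times\eqref{3.15}$ with the estimate derived in Steps 1--3, the nonlinear remainders are dominated by $\tfrac{1}{2D_1}\|\nabla{\bf u}\|_{L^2}^2+C\varepsilon_0\|\sqrt{\rho}{\bf u}_t\|_{L^2}^2$, producing the Lyapunov--type inequality \eqref{3.30}
\begin{equation*}
\frac{\rm d}{{\rm d}t}\mathcal M_2(t)+\eta\,\mathcal M_2(t)\le 0,\qquad t\ge T_\star,
\end{equation*}
for some $\eta>0$. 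Gronwall's inequality and the equivalence $\mathcal M_2(t)\sim\mathcal M_1(t)+\|\nabla{\bf u}(t)\|_{L^2}^2$ then yield \eqref{3.16} on $[T_\star,\infty)$. On the compact interval $[0,T_\star]$ the bound is obvious from the strong--solution regularity, so the estimate is extended to all $t\ge 0$ by adjusting the constant $C_5$.

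\smallskip

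The main difficulty is precisely Step 3: turning the integrability $\int_0^\infty\|\nabla{\bf u}\|_{L^2}^2\,{\rm d}s<\infty$ into pointwise smallness without presuming a lower bound on $\rho$. This requires that the time--derivative of $\|\nabla{\bf u}\|_{L^2}^2$ be controlled independently of $\inf\rho$, which in turn forces the nonlinear analysis to rely entirely on the effective viscous flux decomposition of Lemma \ref{lemma2.3} together with the upper bound \eqref{1.7}. Once this smallness is secured, the closure in Step 4 is straightforward.
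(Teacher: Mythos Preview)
Your proposal follows essentially the same strategy as the paper: derive a first--order energy inequality (the paper's \eqref{3.24}), control the nonlinear remainder through the effective viscous flux decomposition of Lemma~\ref{lemma2.3}, exploit smallness of $\|\nabla{\bf u}\|_{L^2}$ for large time to close a Lyapunov inequality, and conclude by Gronwall. Two points of difference are worth noting. First, the paper multiplies by the material derivative $\dot{\bf u}$ rather than ${\bf u}_t$; this makes the dissipation $\|\sqrt{\rho}\dot{\bf u}\|_{L^2}^2$ match the right--hand side of the elliptic estimate \eqref{2.3} directly and leaves only the clean cubic remainder $\|\nabla{\bf u}\|_{L^3}^3$, whereas your choice generates an extra convective term $\int\rho|{\bf u}|^2|\nabla{\bf u}|^2$ and forces you to pass between $\sqrt{\rho}{\bf u}_t$ and $\rho\dot{\bf u}$ when invoking \eqref{2.3}. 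This is cosmetic, not fatal.

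Second, and more substantively, your Step~3 as written does not quite close. A bound on $\bigl|\tfrac{\rm d}{{\rm d}t}\|\nabla{\bf u}\|_{L^2}^2\bigr|$ valid only on finite time intervals, combined with $\int_0^\infty\|\nabla{\bf u}\|_{L^2}^2\,{\rm d}t<\infty$, yields at best a sequence of times along which $\|\nabla{\bf u}\|_{L^2}$ is small, not the uniform bound $\|\nabla{\bf u}(t)\|_{L^2}\le\varepsilon_0$ for \emph{all} $t\ge T_\star$ that you need. The paper resolves this by an explicit bootstrap: from integrability and continuity one first locates a single time $T_1$ at which the full functional $\mathcal M_2$ drops below $\delta_1$; one then supposes the bound $<2\delta_1$ fails at some first time $T_2>T_1$, observes that on $[T_1,T_2]$ the smallness is in force so the Lyapunov inequality \eqref{3.30} already holds there, and integrates it to obtain $\mathcal M_2(T_2)\le\mathcal M_2(T_1)<\delta_1$, a contradiction. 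This is presumably the ``standard continuity argument'' you have in mind, but it should be spelled out: it is the differential inequality itself---not an independent global bound on the time derivative---that propagates the smallness forward.
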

\begin{proof} By the definition of material derivative, we can
rewrite \eqref{1.1}$_2$ as follows
\begin{equation}\rho\dot {\bf u}+\nabla(P(\rho)-1)=\mu\Delta {\bf u}+(\mu+\lambda)\nabla \text{div}{\bf u}.\label{3.17}\end{equation}
Multiplying \eqref{3.17} by $\dot {\bf u}$ and then integrating the
resultant equation over $\mathbb T^3$, one has
\begin{equation}\label{3.18}\int_{\mathbb T^3}\rho|\dot {\bf u}|^2\mathrm{d}{\bf x}+\int_{\mathbb T^3}\nabla (P(\rho)-1)\dot {\bf u}\mathrm{d}{\bf x}=\int_{\mathbb T^3}\left(\mu\Delta{\bf u}+(\mu+\lambda)\nabla\text{div}{\bf u}\right)\dot {\bf u}\mathrm{d}{\bf x}.
\end{equation}
Using \eqref{1.1}$_1$ and integration by parts several times, the
second term on the left--side of \eqref{3.18} can be rewritten as
follows:
\begin{equation}\label{3.19}\begin{split}&\int_{\mathbb T^3}\nabla (P(\rho)-1)\dot {\bf u}\mathrm{d}{\bf x}\\=&
\int_{\mathbb T^3}\nabla(P(\rho)-1)\left( {\bf u}_t+{\bf u}\cdot\nabla{\bf u}\right)\mathrm{d}{\bf x}\\=&
-\frac{\mathrm{d}}{\mathrm{d}t}\int_{\mathbb T^3}\left(P(\rho)-1)\right)\text{div}{\bf u}\mathrm{d}{\bf x}+\int_{\mathbb T^3}P'(\rho)\rho_t\text{div}{\bf u}\mathrm{d}{\bf x}\\&+\int_{\mathbb T^3}\left(P'(\rho){\bf u}\cdot\nabla\rho\text{div}{\bf u}+P(|\text{div}{\bf u}|^2-{ u}^i_j{ u}^j_i)\right)\mathrm{d}{\bf x}\\=&
-\frac{\mathrm{d}}{\mathrm{d}t}\int_{\mathbb T^3}\left(P(\rho)-1)\right)\text{div}{\bf u}\mathrm{d}{\bf x}+\int_{\mathbb T^3}\left(-\rho P'(\rho)|\text{div}{\bf u}|^2+P(|\text{div}{\bf u}|^2-{ u}^i_j{ u}^j_i)\right)\mathrm{d}{\bf x}.
\end{split}\end{equation}
Similarly, the term on the right--side  of \eqref{3.18} can be
rewritten as follows:
\begin{equation}\label{3.20}\begin{split}&\int_{\mathbb T^3}\left(\mu\Delta{\bf u}+(\mu+\lambda)\text{div}\nabla{\bf u}\right)\dot {\bf u}\mathrm{d}{\bf x}\\ =&\int_{\mathbb T^3}\left(\mu\Delta{\bf u}+(\mu+\lambda)\nabla\text{div}{\bf u}\right)\left({\bf u}_t+{\bf u}\cdot\nabla{\bf u}\right)\mathrm{d}{\bf x}\\=&-\frac{1}{2}\frac{\mathrm{d}}{\mathrm{d}t}\int_{\mathbb T^3}\left(\mu|\nabla{\bf u}|^2+(\mu+\lambda)|\text{div}{\bf u}|^2\right)\mathrm{d}{\bf x}-\mu\int_{\mathbb T^3}\left({ u}^i_j{ u}^k_j{ u}^i_k-\frac{1}{2}|{ u}^i_j|^2\text{div}{\bf u}\right)\mathrm{d}{\bf x}
\\&-(\mu+\lambda)\int_{\mathbb T^3}\left({ u}^i_j{ u}^j_i\text{div}{\bf u}-\frac{1}{2}(\text{div}{\bf u})^3\right)\mathrm{d}{\bf x}.
\end{split}\end{equation}
Substituting \eqref{3.19} and \eqref{3.20} into \eqref{3.18}, one has
\begin{equation}\label{3.21}\begin{split}&\frac{\mathrm{d}}{\mathrm{d}t}\int_{\mathbb T^3}\left[\frac{1}{2}\left(\mu|\nabla{\bf u}|^2+(\mu+\lambda)|\text{div}{\bf u}|^2\right)-\left(P(\rho)-1\right)\text{div}{\bf u}\right]\mathrm{d}{\bf x}+\int_{\mathbb T^3}\rho|\dot {\bf u}|^2\mathrm{d}{\bf x}\\ =&\int_{\mathbb T^3}\left(\rho P'(\rho)|\text{div}{\bf u}|^2-P(|\text{div}{\bf u}|^2-{ u}^i_j{ u}^j_i)\right)\mathrm{d}{\bf x}-\mu\int_{\mathbb T^3}\left({ u}^i_j{ u}^k_j{ u}^i_k-\frac{1}{2}|{ u}^i_j|^2\text{div}{\bf u}\right)\mathrm{d}{\bf x}
\\&-(\mu+\lambda)\int_{\mathbb T^3}\left({ u}^i_j{ u}^j_i\text{div}{\bf u}-\frac{1}{2}(\text{div}{\bf u})^3\right)\mathrm{d}{\bf x}\\ =&~I_{21}+I_{22}+I_{23}.
\end{split}\end{equation}
For the first term on the right--side of \eqref{3.21}, it follows
from \eqref{1.7} that
\begin{equation}\label{3.22}|I_{21}|\leq C \|\nabla{\bf u}\|^2_{L^2({\mathbb T^3})}.
\end{equation}
For the last two terms on the right--side of \eqref{3.21}, by virtue
of \eqref{1.7}, Lemma \ref{lemma2.2}, Lemma \ref{lemma2.3},
Sobolev's inequality and Young's inequality, we have
\begin{equation}\label{3.23}\begin{split}|I_{22}|+|I_{23}|\leq &~
C\|\nabla{\bf u}\|^3_{L^3({\mathbb T^3})}\\ \leq& ~C\|\nabla{\bf u}\|
^\frac{3}{2}_{L^2({\mathbb T^3})}\|\nabla{\bf u}\|^\frac{3}{2}_{L^6({\mathbb T^3})}\\
 \leq& ~C\|\nabla{\bf u}\|_{L^2({\mathbb T^3})}^\frac{3}{2}\|(F,\nabla{\bf w},P(\rho)-1)
 \|^\frac{3}{2}_{L^6({\mathbb T^3})}\\ \leq& ~C\|\nabla{\bf u}\|^\frac{3}{2}_{L^2({\mathbb T^3})}
 \left(\|(\nabla F,\nabla^2{\bf w})\|^\frac{3}{2}_{L^2({\mathbb T^3})}+\| F\|^\frac{3}{2}
 _{L^2({\mathbb T^3})}+\|(P(\rho)-1)\|^\frac{3}{2}_{L^6({\mathbb T^3})}\right)\\
  \leq& ~C\|\nabla{\bf u}\|^\frac{3}{2}_{L^2({\mathbb T^3})}\left(\|\sqrt{\rho}\dot{\bf u}
  \|^\frac{3}{2}_{L^2({\mathbb T^3})}+\|\varrho\|^\frac{3}{2}_{L^2({\mathbb T^3})}
  +\|\nabla{\bf u}\|^\frac{3}{2}_{L^2({\mathbb T^3})}+\|\varrho\|^\frac{3}{2}_{L^6({\mathbb T^3})}\right)\\
  \leq& ~\frac{1}{2}\|\sqrt{\rho}\dot{\bf u}\|^2_{L^2({\mathbb T^3})}+C\left(\|\nabla{\bf u}\|^6
  _{L^2({\mathbb T^3})}+\|\nabla{\bf u}\|^3_{L^2({\mathbb
  T^3})}\right.\\
  &\quad\left.+\|\varrho\|^6_{L^2({\mathbb T^3})}+\|\nabla{\bf
u}\|^2_{L^2({\mathbb T^3})}
  +\|\varrho\|^2_{L^6({\mathbb T^3})}\right)\\ \leq& ~\frac{1}{2}
  \|\sqrt{\rho}\dot{\bf u}\|^2_{L^2({\mathbb T^3})}+C\left(\|\nabla{\bf u}\|^6_{L^2({\mathbb T^3})}
  +\|\nabla{\bf u}\|^3_{L^2({\mathbb T^3})}+\|\nabla{\bf u}\|^2_{L^2({\mathbb T^3})}+\|\varrho\|^2_{L^2({\mathbb T^3})}\right).
\end{split}\end{equation}
Plugging  \eqref{3.22}--\eqref{3.23} into \eqref{3.21}, we have
\begin{equation}\label{3.24}\begin{split}&\frac{\mathrm{d}}{\mathrm{d}t}
\int_{\mathbb T^3}\left[\frac{1}{2}\left(\mu|\nabla{\bf u}|^2+(\mu+\lambda)
|\text{div}{\bf u}|^2\right)-\left(P(\rho)-1\right)\text{div}{\bf u}\right]
\mathrm{d}{\bf x}+\frac{1}{2}\int_{\mathbb T^3}\rho|\dot {\bf u}|^2\mathrm{d}{\bf x}
\\ \leq &~C\left(\|\nabla{\bf u}\|^6_{L^2({\mathbb T^3})}
+\|\nabla{\bf u}\|^3_{L^2({\mathbb T^3})}+\|\nabla{\bf u}\|^2_{L^2({\mathbb T^3})}+\|\varrho\|^2_{L^2({\mathbb T^3})}\right).
\end{split}\end{equation}
This, combined with  \eqref{1.6}, \eqref{1.7}, \eqref{3.3} and \eqref{lemma3.1} yields
\begin{equation}\nonumber\sqrt{\rho}\dot{\bf u}\in L^2_{\text{loc}}((0,\infty);L^2(\mathbb T^3)),\end{equation} and
 \begin{equation}\nonumber\int_{\mathbb T^3}\left[\frac{1}{2}\left(\mu|\nabla{\bf u}|^2+(\mu+\lambda)|\text{div}{\bf u}|^2\right)-\left(P(\rho)-1\right)\text{div}{\bf u}\right](t)\mathrm{d}{\bf x}\in C[0,\infty),
\end{equation}
which together with \eqref{1.6} implies that
 \begin{equation}\label{3.25}\int_{\mathbb T^3}\left[\frac{1}{2}\left(\mu|\nabla{\bf u}|^2+(\mu+\lambda)
 |\text{div}{\bf u}|^2\right)-\left(P(\rho)-1\right)\text{div}{\bf u}+D_2|\varrho|^2\right](t)\mathrm{d}{\bf x}\in C[0,\infty),
\end{equation}
where $D_2$ is a suitably large positive constant. In light of
\eqref{3.1} and \eqref{3.5}, we obtain
 \begin{equation}\label{3.26}\int_0^\infty\int_{\mathbb T^3}\left[\frac{1}{2}\left(\mu|\nabla{\bf u}|^2+(\mu+\lambda)|\text{div}{\bf u}|^2\right)-\left(P(\rho)-1\right)\text{div}{\bf u}+D_2|\varrho|^2\right]\mathrm{d}{\bf x}\mathrm{d}t<\infty.
\end{equation}
Next, we choose a positive  constant $D_3$ suitably large and define
the temporal energy functional
\begin{equation}\nonumber \mathcal M_2(t)=D_3\mathcal M_1(t)+\int_{\mathbb T^3}\left[\frac{1}{2}
\left(\mu|\nabla{\bf u}|^2+(\mu+\lambda)|\text{div}{\bf
u}|^2\right)-\left(P(\rho)-1\right)\text{div}{\bf
u}+D_2|\varrho|^2\right](t)\mathrm{d}{\bf x},
\end{equation}
for any $t\ge 0$. Note that  $\mathcal M_2(t)$ is  equivalent to
$\|(\varrho,\sqrt{\rho}{\bf v},\nabla{\bf u})(t)\|^2_{L^2(\mathbb
T^3)}$ if we choose  $D_2$ and $D_3$  large enough. Fix a positive
constant $\delta_1$ that may be small. Then, it follows from
\eqref{3.1} and \eqref{3.26} that there exists a positive constant
$T_1>0$ such that
\begin{equation}\label{3.27}\mathcal M_2(T_1)<\delta_1.
\end{equation}
Now, we claim that
\begin{equation}\label{3.28}\int_{\mathbb T^3}\left[\frac{1}{2}\left(\mu|\nabla{\bf u}|^2+(\mu+\lambda)|\text{div}{\bf u}|^2\right)-\left(P(\rho)-1\right)\text{div}{\bf u}+D_2|\varrho|^2\right](t)\mathrm{d}{\bf x}<2\delta_1
\end{equation}
holds for any $t\geq T_1$. Assume this claim for the moment.  Then,
\eqref{3.28} implies that
\begin{equation}\label{3.29}\int_{\mathbb T^3}
\left[\left(\mu|\nabla{\bf
u}|^2+(\mu+\lambda)|\text{div}{\bf
u}|^2\right)\right](t)\mathrm{d}{\bf x}<4\delta_1,
\end{equation}
 for any $t\geq T_1$. Let $\delta_1$ be small enough, then taking a linear combination of \eqref{3.15} and \eqref{3.24} yields
\begin{equation}\label{3.30}\frac{\mathrm{d}}{\mathrm{d}t}\mathcal M_2(t)+\frac{\mathcal M_2(t)}{D_3}+\frac{\|\sqrt{\rho}\dot{\bf u}(t)\|
^2_{L^2({\mathbb T^3})}}{D_3}\leq 0,\end{equation} for any $t\geq
T_1$. Integrating \eqref{3.30} with respect to $t$ over $[0, t]$ gives
\eqref{3.16} immediately. Thus, to complete the proof of Lemma
\ref{lemma3.2}, it suffices to establish \eqref{3.28}.

Next, we return to the proof of \eqref{3.28}. If \eqref{3.28} is
false, by \eqref{3.25}, there exists a time $T_2> T_1$ such that
\begin{equation}\label{3.31}\int_{\mathbb T^3}\left[\frac{1}{2}\left(\mu|\nabla{\bf u}|^2
+(\mu+\lambda)|\text{div}{\bf u}|^2\right)-\left(P(\rho)-1\right)\text{div}{\bf u}+D_2|\varrho|^2\right](T_2)\mathrm{d}{\bf x}=2\delta_1.
\end{equation}
Taking a minimal value of $T_2$ satisfying \eqref{3.31}, then \eqref{3.28} holds for any $T_1\le t<T_2$. Integrating \eqref{3.30} from $T_1$ to $T_2$, one has $$\mathcal M_2(T_2)\leq \mathcal M_2(T_1)<\delta_1,$$
which contradicts \eqref{3.31}. Hence \eqref{3.28} holds for any $t\ge T_1$. The proof of lemma is completed.
\end{proof}

\bigskip
\bigskip
\setcounter{equation}{0}
\section{Proof of Theorem \ref{2mainth} and Theorem \ref{3mainth}}
\setcounter{equation}{0}
\smallskip

We turn to prove Theorem \ref{2mainth} and Theorem \ref{3mainth} in
this section. The following lemma is devoted to deriving uniform
positive lower bound of $\rho$.
\begin{Lemma}\label{lemma4.1}  Under the assumptions of Theorem \ref{2mainth}, there exists a
 positive constant $c_1>0$, which is independent of $t$,  such that
\begin{equation}\label{4.1}\inf\limits_{{\bf x}\in\mathbb T^3}\rho({\bf x},t)\ge c_1,\end{equation}
for any $t\geq 0$.
\end{Lemma}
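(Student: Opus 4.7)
The goal is to prove a time--independent positive lower bound for $\rho$, which by Theorem~\ref{1mainth} and the assumption $\rho_0\ge c_0>0$ should follow from a damping--type mechanism for $\log\rho$. Following the roadmap sketched in the introduction, I would work with the auxiliary quantity
\begin{equation*}
H \;\overset{\triangle}{=}\; (2\mu+\lambda)\log\rho + \Delta^{-1}\mathrm{div}(\rho{\bf v}),
\end{equation*}
whose design is dictated by the effective viscous flux: rewriting $\eqref{1.1}_1$ as $(\log\rho)_t + {\bf u}\cdot\nabla\log\rho = -\mathrm{div}\,{\bf u}$ and using the identity $\mathrm{div}\,{\bf u} = \tfrac{1}{2\mu+\lambda}\bigl(F + P(\rho)-1\bigr)$ produces, along a particle trajectory $X(t;{\bf x})$ generated by ${\bf u}$,
\begin{equation*}
(2\mu+\lambda)\,\frac{D}{Dt}\log\rho \;=\; -F - (P(\rho)-1).
\end{equation*}
Applying $\Delta^{-1}\mathrm{div}$ to the reformulated momentum equation \eqref{3.6} and using $F=(2\mu+\lambda)\mathrm{div}\,{\bf v}-(P(\rho)-1)$, the $F$--contribution cancels; after adding the transport derivative ${\bf u}\cdot\nabla\Delta^{-1}\mathrm{div}(\rho{\bf v})$ needed to pass from $\partial_t$ to $\tfrac{D}{Dt}$, one arrives at an ODE of the schematic form
\begin{equation*}
\frac{D H}{Dt} \;=\; -(P(\rho)-1) \;+\; \mathcal N(\rho,{\bf u},{\bf v},{\bf m}_0),
\end{equation*}
where $\mathcal N$ collects commutators between ${\bf u}\cdot\nabla$ and $\Delta^{-1}\mathrm{div}$ together with the Riesz--transform nonlinearities ${\mathcal R}_i{\mathcal R}_j(\rho v^iv^j)$ and ${\mathcal R}_i{\mathcal R}_j(\rho m_0^iv^j)$ inherited from \eqref{3.7}. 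This is the equation \eqref{4.4} referenced in the outline.

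The next step is to exploit the structure of the right--hand side. Since $P(\rho)-1=\rho^\gamma-1$ is bounded above on $[0,M]$ and, crucially, $-(P(\rho)-1)\to 1$ as $\rho\to 0^+$, the linear part provides the damping that prevents $\log\rho$ from diverging to $-\infty$. I would estimate the nonlinear bulk $\mathcal N$ in $L^\infty_{\bf x}$ by placing $\rho\in L^\infty$, ${\bf v}\in L^\infty_tL^6_{\bf x}$ (from Lemma~\ref{lemma3.1}--\ref{lemma3.2} and Poincar\'e via \eqref{3.14}), and using that $\Delta^{-1}\mathrm{div}$ maps $L^p$ to $W^{1,p}\hookrightarrow C^0(\mathbb{T}^3)$ for $p>3$, together with the Calder\'on--Zygmund boundedness of ${\mathcal R}_i{\mathcal R}_j$. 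The exponential decay of $\|\varrho\|_{L^2}$, $\|\sqrt{\rho}{\bf v}\|_{L^2}$ and $\|\nabla{\bf u}\|_{L^2}$ yields $\|\mathcal N\|_{L^\infty_{\bf x}}\in L^1(0,\infty)$, so integrating along the characteristic $X(t;{\bf x})$ gives a uniform lower bound $H(X(t;{\bf x}),t)\ge -C$.

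Converting this back to $\rho$ uses the uniform $L^\infty_{\bf x}$ bound on $\Delta^{-1}\mathrm{div}(\rho{\bf v})$ established above, giving $\log\rho(X(t;{\bf x}),t)\ge -C$, i.e.\ $\rho\ge c_1>0$ on the image of every particle path starting from the full torus at $t=0$. Since the flow of ${\bf u}$ is a volume--preserving homeomorphism of $\mathbb{T}^3$ for each $t$ (by $\rho\in L^\infty$, ${\bf u}\in L^2_tW^{2,q}_{\bf x}$ with $q>3$, and the continuity equation), one recovers $\inf_{{\bf x}\in\mathbb{T}^3}\rho({\bf x},t)\ge c_1$ uniformly in $t$, which is \eqref{4.1}. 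The main obstacle will be the careful treatment of the commutator term ${\bf u}\cdot\nabla\Delta^{-1}\mathrm{div}(\rho{\bf v})$: it has only one derivative more regular than ${\bf u}\cdot\rho{\bf v}$, so showing it is integrable in $t$ uniformly in ${\bf x}$ requires combining the Sobolev embedding $W^{1,6}\hookrightarrow L^\infty$ with the decay rates already available for $\|{\bf v}\|_{L^6}$ and $\|\nabla{\bf u}\|_{L^2}$, and using the initial lower bound $\rho_0\ge c_0$ only as a starting value for the trajectory ODE rather than inside the estimate.
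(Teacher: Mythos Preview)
Your overall strategy---defining $H=(2\mu+\lambda)\log\rho+\Delta^{-1}\mathrm{div}(\rho{\bf v})$, deriving a transport ODE for $H$ along particle paths, and using the sign of $-(P(\rho)-1)$ as damping when $\rho$ is small---matches the paper's. However, your plan for bounding the nonlinear remainder $\mathcal N$ in $L^\infty_{\bf x}$ has a genuine gap.

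The individual pieces you list cannot be controlled in $L^\infty$ by the tools you name. The Riesz transforms $\mathcal R_i\mathcal R_j$ are Calder\'on--Zygmund operators bounded on $L^p$ for $1<p<\infty$ but \emph{not} on $L^\infty$, so $\mathcal R_i\mathcal R_j(\rho v^iv^j)$ is not placed in $L^\infty$ just because $\rho v^iv^j$ is. Likewise, $\nabla\Delta^{-1}\mathrm{div}$ is a \emph{zero}-order operator, so ${\bf u}\cdot\nabla\Delta^{-1}\mathrm{div}(\rho{\bf v})$ has the same scaling as ${\bf u}\cdot\mathcal R_i\mathcal R_j(\rho{\bf v})$; your claim that it is ``one derivative more regular than ${\bf u}\cdot\rho{\bf v}$'' is incorrect, and the $W^{1,6}\hookrightarrow L^\infty$ route would require a derivative you do not have. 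The paper's key step is that after adding the transport term ${\bf u}\cdot\nabla\Delta^{-1}\mathrm{div}(\rho{\bf v})$ to the Riesz nonlinearities from \eqref{3.7}, the sum is \emph{exactly} the commutator
\[
[u^j,\mathcal R_i\mathcal R_j](\rho v^i)=[v^j,\mathcal R_i\mathcal R_j](\rho v^i)+[m_0^j,\mathcal R_i\mathcal R_j](\rho v^i),
\]
and the Coifman--Lions--Meyer--Semmes theorem provides a genuine gain of one derivative: $(g,h)\mapsto[g^j,\mathcal R_i\mathcal R_j]h^i$ maps $W^{1,r_1}\times L^{r_2}$ continuously into $W^{1,r_3}$ with $\tfrac1{r_3}=\tfrac1{r_1}+\tfrac1{r_2}$. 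Taking $r_1=6$, $r_2=12$, $r_3=4$ and then $W^{1,4}(\mathbb T^3)\hookrightarrow L^\infty$ yields the needed pointwise bound. To make the result $L^1$ in time one also needs $\|\nabla{\bf u}\|_{L^6}\lesssim\|\sqrt{\rho}\dot{\bf u}\|_{L^2}+1$ via Lemma~\ref{lemma2.3}, combined with $\int_0^\infty\|\sqrt{\rho}\dot{\bf u}\|_{L^2}^2\,dt<\infty$ from \eqref{3.30}; the exponential decay of $\|\varrho\|_{L^2}$, $\|\sqrt{\rho}{\bf v}\|_{L^2}$ and $\|\nabla{\bf u}\|_{L^2}$ alone does not close this.

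A second, smaller gap: simply integrating the ODE does not directly give $H\ge -C$, because $-(P(\rho)-1)$ has the wrong sign whenever $\rho>1$ and would contribute a term linear in $t$. The paper handles this with a continuity/contradiction argument: it first fixes $T_3$ so that the tail $\int_{T_3}^\infty\|\mathcal N\|_{L^\infty}dt$ and $\|\Delta^{-1}\mathrm{div}(\rho{\bf v})\|_{L^\infty}$ are small, then shows that if $H$ ever tried to descend from level $-\kappa$ to $-2\kappa$ after $T_3$, one would have $\rho<1$ on that stretch so that $-(P(\rho)-1)>0$, forcing $H$ to increase---a contradiction.
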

\begin{proof}  First, motivated by Desjardins \cite{Desjardins}, we rewrite mass conservation equation \eqref{1.1}$_1$ as
\begin{equation}\label{4.2}(\log\rho)_t+{\bf u}\cdot\nabla\log\rho+\text{div}{\bf u}=0.
\end{equation}
Defining
$H\overset{\triangle}=(2\mu+\lambda)\log\rho+\Delta^{-1}\text{div}(\rho{\bf
v})$, and then combining \eqref{3.7} with \eqref{4.2}, we have
\begin{equation}\label{4.3} H_t+{\bf u}\cdot\nabla H+(P(\rho)-1)=[v^j,\mathcal R_i\mathcal R_j](\rho v^i)+[m_0^j,\mathcal R_i\mathcal R_j](\rho v^i),
\end{equation}
where $[u^j,\mathcal R_i\mathcal R_j]( v^i)=u^j\mathcal R_i\mathcal
R_j( v^i)-\mathcal R_i\mathcal R_j( v^iu^j)$. Let ${\bf y}\in\mathbb
T^3$ and define the corresponding particle path ${\bf x}(t,{\bf y})$ by
\begin{equation}\nonumber
\begin{cases}
\frac{ \mathrm{d}{\bf x}(t,{\bf y})}{\mathrm{d}t}={\bf u}({\bf x}(t,{\bf y}),t),\\
{\bf x}(t_0,{\bf y})={\bf y}.
\end{cases}
\end{equation}
Then, \eqref{4.3} can be reformulated as
\begin{equation}\label{4.4} \frac{\mathrm{d}}{\mathrm{d}t}H(t)+(P(\rho)-1)=[v^j,\mathcal R_i\mathcal R_j]
(\rho v^i)+[m_0^j,\mathcal R_i\mathcal R_j](\rho v^i).
\end{equation}
In virtue of the results of Coifman, Lions, Meyer and Semmes \cite{Coifman}, the following map
\begin{equation}\nonumber\begin{split}W^{1,r_1}(\mathbb T^{N})^N\times L^{r_2}(\mathbb T^{N})^N&\rightarrow W^{1,r_3}(\mathbb T^{N})^N,\\
({\bf u},{\bf v})&\rightarrow [u_j,\mathcal R_i\mathcal R_j]v_i,
\end{split}\end{equation}
is continuous for any $N\geq 2$ as soon as
$\frac{1}{r_3}=\frac{1}{r_1}+\frac{1}{r_2}$. Hence,    using
\eqref{1.7}, Lemma \ref{lemma2.3} and \eqref{3.14}, we can deduce
that
\begin{equation}\label{4.5}\begin{split}&\big{\|}[v^j,\mathcal R_i\mathcal R_j](\rho v^i)\big{\|}_{L^\infty(\mathbb T^{3})}+\big{\|}[m_0^j,\mathcal R_i\mathcal R_j](\rho v^i)\big{\|}_{L^\infty(\mathbb T^{3})}\\ \leq &~C\left(\big{\|}[v^j,\mathcal R_i\mathcal R_j](\rho v^i)\big{\|}_{W^{1,4}(\mathbb T^{3})}+\big{\|}[m_0^j,\mathcal R_i\mathcal R_j](\rho v^i)\big{\|}_{W^{1,4}(\mathbb T^{3})}\right)\\ \leq &~C\left(
\|{\bf v}\|_{W^{1,6}(\mathbb T^{3})}+\|{\bf m}_0\|_{W^{1,6}(\mathbb T^{3})}\right)\|\rho{\bf v}\|_{L^{12}(\mathbb T^{3})}\\ \leq &~C\left(
\|\nabla {\bf u}\|_{L^{6}(\mathbb T^{3})}+1\right)\|{\bf v}\|_{L^{12}(\mathbb T^{3})}\\ \leq &~C\left(
\|F\|_{L^6(\mathbb T^{3})}+\|\nabla{\bf w}\|_{L^6(\mathbb T^{3})}+\|(P(\rho)-1)\|_{L^6(\mathbb T^{3})}+1\right)\|\nabla{\bf u}\|_{L^{3}(\mathbb T^{3})}\\ \leq &~C\left(
\|\sqrt{\rho}\dot{\bf u}\|_{L^2(\mathbb T^{3})}+1\right)\|\nabla{\bf u}\|^{\frac{1}{2}}_{L^{2}(\mathbb T^{3})}\|\nabla{\bf u}\|^{\frac{1}{2}}_{L^{6}(\mathbb T^{3})}\\
 \leq &~C\left(
\|\sqrt{\rho}\dot{\bf u}\|_{L^2(\mathbb
T^{3})}+1\right)^{\frac{3}{2}}\|\nabla{\bf
u}\|^{\frac{1}{2}}_{L^{2}(\mathbb T^{3})}.
\end{split}\end{equation}
On the other hand, it follows from \eqref{3.30} that
\begin{equation}\label{4.6}\int_0^\infty\int_{\mathbb T^3}\rho|\dot{\bf u}|^2\mathrm{d}{\bf x}\mathrm{d}t<\infty,
\end{equation}
where we have used \eqref{3.16}.
Therefore, this together with Theorem~\ref{1mainth} and \eqref{4.5}
implies that
\begin{equation}\label{4.7}\begin{split}&\int_0^\infty\big{\|}[v^j,\mathcal R_i\mathcal R_j](\rho v^i)\big{\|}_{L^\infty(\mathbb T^{3})}\mathrm{d}t+\int_0^\infty\big{\|}[m_0^j,\mathcal R_i\mathcal R_j](\rho v^i)\big{\|}_{L^\infty(\mathbb T^{3})}\mathrm{d}t<\infty.
\end{split}\end{equation}
In virtue  \eqref{1.8}, \eqref{4.4} and \eqref{4.7}, it is clear
that
\begin{equation}\label{4.8}H(t)\in C[0,\infty),
\end{equation}
where we have abbreviated $H({\bf x}(t),t)$ by $H(t)$ for
convenience. By virtue of  \eqref{1.7} and \eqref{3.14}, and
Theorem~\ref{1mainth}, one has
\begin{equation}\label{4.9}\|\Delta^{-1}\text{div}(\rho{\bf v})\|_{L^\infty(\mathbb T^{3})}\leq C\|\rho{\bf v}\|_{L^4(\mathbb T^{3})}\leq C\|{\bf v}\|_{L^4(\mathbb T^{3})}\leq C\|\nabla{\bf u}\|_{L^2(\mathbb T^{3})}\leq C\text{e}^{-\eta_1t}.
\end{equation}
Fix a positive constant $\delta_2$ that may be small, in view of \eqref{4.7} and \eqref{4.9}, there exists a positive constant $T_3>0$ such that
\begin{equation}\label{4.10}\begin{split}\int_{t}^\infty\Big{\|}\left([v^j,\mathcal R_i\mathcal R_j](\rho v^i),[m_0^j,
\mathcal R_i\mathcal R_j](\rho v^i)\right)\Big{\|}_{L^\infty(\mathbb
T^{3})}\mathrm{d}t+\|\Delta^{-1}\text{div}(\rho{\bf v})(t)
\|_{L^\infty(\mathbb T^{3})} \leq \delta_2,
\end{split}\end{equation}
for any $t\ge T_3$. Combining \eqref{4.8} and \eqref{4.9}, we see
that $\|\log\rho({\bf x},t)\|_{L^\infty(0,T_3;L^\infty(\mathbb
T^3))}\le C(T_3)$. Assume that there exists a time $T_4\ge T_3$ such
that $0<c_2=\rho(T_4)\le \frac{1}{\text{e}^3}$. Otherwise, we prove
 \eqref{4.1}. Setting
$\kappa=-((2\mu+\lambda)\log\rho+\Delta^{-1}\text{div}(\rho{\bf
v}))(T_4)$, then it is clear that $\kappa>2\mu+\lambda$  if
$\delta_2$ is small enough. Now, we claim that
\begin{equation}\label{4.11}-((2\mu+\lambda)\log\rho+\Delta^{-1}\text{div}(\rho{\bf v}))(t)< 2\kappa
\end{equation}
holds for any $t\ge T_4$. Assume this claim for the moment, then
\eqref{4.1} follows immediately. Next, we return to the proof of
\eqref{4.11}. If \eqref{4.11} is false, by \eqref{4.8}, there exists
a time $T_6> T_4$ such that
\begin{equation}\label{4.12}-((2\mu+\lambda)\log\rho+\Delta^{-1}\text{div}(\rho{\bf v}))(T_6)=2\kappa.
\end{equation}
We take  a minimal value of $T_6$ satisfying \eqref{4.12} and then
choose a maximal value of $T_5<T_6$ such that $-((2\mu+\lambda)\log\rho+\Delta^{-1}\text{div}(\rho{\bf v}))(T_5)= \kappa$. Thus we have
\begin{equation}\label{4.13}-((2\mu+\lambda)\log\rho+\Delta^{-1}\text{div}(\rho{\bf v}))(t)\in [\kappa,2\kappa].
\end{equation}
for any $t\in [T_5,T_6]$, which implies that
$0<\rho(t)<\frac{1}{\text{e}}$ for  any $t\in [T_5,T_6]$. Using
\eqref{4.10}, and integrating \eqref{4.4} along particle
trajectories from $T_5$ to $T_6$, we have
$$-\kappa\ge -\int^{T_5}_{T_6}[P(\rho(t))-1 ]\mathrm{d}t-\int_{T_5}^{T_6}\Big{\|}\left([v^j,\mathcal R_i\mathcal R_j](\rho v^i),[m_0^j,\mathcal R_i\mathcal R_j](\rho v^i)\right)(t)\Big{\|}_{L^\infty(\mathbb T^{3})}\mathrm{d}t\ge -\delta_2,$$
which is impossible if  $\delta_2$ is small enough. We therefore
conclude that there is no such time $T_6$, which is bigger than $T_4$,  such that
$-((2\mu+\lambda)\log\rho+\Delta^{-1}\text{div}(\rho{\bf v}))(T_6)=2\kappa
$. Since ${\bf y}\in\mathbb T^3$ is
arbitrary, we have  $((2\mu+\lambda)\log\rho+\Delta^{-1}\text{div}(\rho{\bf v}))(t)>-2\kappa
$ on $\mathbb
T^3\times[T_4,\infty)$, and \eqref{4.1} follows immediately. The proof of lemma is completed.

\end{proof}

Now we are in a position to prove Theorem~\ref{2mainth} and Theorem~\ref{3mainth}.
\bigskip

\noindent{\bf Proof of Theorem~\ref{2mainth}.} Multiplying \eqref{4.4} by $H(t)$, we have
 \begin{equation}\nonumber \begin{split} &\frac{1}{2}\frac{\mathrm{d}}{\mathrm{d}t}H^2(t)+\frac{P(\rho)-1}{(2\mu+\lambda)\log\rho}H^2(t)\\=&\left([v^j,\mathcal R_i\mathcal R_j]
(\rho v^i)+[m_0^j,\mathcal R_i\mathcal R_j](\rho v^i)+\frac{(P(\rho)-1)\Delta^{-1}\text{div}(\rho{\bf v})}{(2\mu+\lambda)\log\rho}\right)H(t).
\end{split}\end{equation}
 In virtue of \eqref{1.7} and \eqref{4.1}, we see  that  $\log\rho\sim P(\rho)-1$. Hence, there exists a positive constant $\eta_5$ such that
\begin{equation}\nonumber\begin{split} &\frac{\mathrm{d}}{\mathrm{d}t}H^2(t)+\eta_5H^2(t)\\ \le &~C\Big{\|}\left([v^j,\mathcal R_i\mathcal R_j](\rho v^i),[m_0^j,\mathcal R_i\mathcal R_j](\rho v^i),\Delta^{-1}\text{div}(\rho{\bf v})\right)(t)\Big{\|}_{L^\infty(\mathbb T^{3})}|H(t)|,
\end{split}\end{equation}
which implies that
\begin{equation}\label{4.14}\begin{split} \frac{\mathrm{d}}{\mathrm{d}t}|H(t)|+\eta_5|H(t)| \le C\Big{\|}\left([v^j,\mathcal R_i\mathcal R_j](\rho v^i),[m_0^j,\mathcal R_i\mathcal R_j](\rho v^i),\Delta^{-1}\text{div}(\rho{\bf v})\right)(t)\Big{\|}_{L^\infty(\mathbb T^{3})}.
\end{split}\end{equation}
Combining  \eqref{1.8}, \eqref{4.5} and \eqref{4.9} yields
$$\Big{\|}\left([v^j,\mathcal R_i\mathcal R_j](\rho v^i),[m_0^j,\mathcal R_i\mathcal R_j](\rho v^i),\Delta^{-1}\text{div}(\rho{\bf v})\right)(t)\Big{\|}_{L^\infty(\mathbb T^{3})}\le C\left(
\|\sqrt{\rho}\dot{\bf u}\|_{L^2(\mathbb T^{3})}+1\right)^{\frac{3}{2}}\text{e}^{-\frac{\eta_1}{2}t}.$$
Substituting the above estimate into \eqref{4.14}, we obtain
\begin{equation}\nonumber \frac{\mathrm{d}}{\mathrm{d}t}|H(t)|+\eta_5|H(t)|\le C\left(
\|\left(\sqrt{\rho}\dot{\bf u}\right)(t)\|_{L^2(\mathbb T^{3})}+1\right)^{\frac{3}{2}}\text{e}^{-\frac{\eta_1}{2}t},
\end{equation}
which implies
\begin{equation}\label{4.15} \frac{\mathrm{d}}{\mathrm{d}t}\left(\text{e}^{{\eta_5}t}|H(t)|\right)\le C\text{e}^{\eta_5t}\left(
\|\left(\sqrt{\rho}\dot{\bf u}\right)(t)\|_{L^2(\mathbb T^{3})}+1\right)^{\frac{3}{2}}\text{e}^{-\frac{\eta_1}{2}t}.
\end{equation}
Integrating \eqref{4.15} along particle trajectories from $0$ to $t$, and using \eqref{4.6} and H\"older's inequality, we obtain
\begin{equation}\nonumber\begin{split}|H(t)|\le &~\text{e}^{-{\eta_5}t}+C\int_0^t\text{e}^{-{\eta_5}(t-\tau)}\left(
\|\left(\sqrt{\rho}\dot{\bf u}\right)(\tau)\|_{L^2(\mathbb
T^{3})}+1\right)^{\frac{3}{2}}\text{e}^{-\frac{\eta_1}{2}\tau}\mathrm{d}\tau\\
\le
&~\text{e}^{-{\eta_5}t}+C\int_0^\frac{t}{2}\text{e}^{-{\eta_5}(t-\tau)}\left(
\|\left(\sqrt{\rho}\dot{\bf u}\right)(\tau)\|_{L^2(\mathbb
T^{3})}+1\right)^{\frac{3}{2}}\text{e}^{-\frac{\eta_1}{2}\tau}\mathrm{d}\tau\\
&+C\int_\frac{t}{2}^t\text{e}^{-{\eta_5}(t-\tau)}\left(
\|\left(\sqrt{\rho}\dot{\bf u}\right)(\tau)\|_{L^2(\mathbb
T^{3})}+1\right)^{\frac{3}{2}}\text{e}^{-\frac{\eta_1}{2}\tau}\mathrm{d}\tau\\
\le &~\text{e}^{-{\eta_5}t}+C\text{e}^{-\frac{\eta_5
t}{2}}\left(\int_0^\frac{t}{2} \|\left(\sqrt{\rho}\dot{\bf
u}\right)(\tau)\|^2_{L^2(\mathbb
T^{3})}\mathrm{d}\tau\right)^\frac{3}{4}\left(\int_0^\frac{t}{2}\text{e}^{-2\eta_1\tau}\mathrm{d}\tau\right)^\frac{1}{4}
\\&+C\text{e}^{-\frac{\eta_5t}{2}}\int_0^\frac{t}{2}\text{e}^{-\frac{\eta_1}{2}\tau}\mathrm{d}\tau
+C\text{e}^{-\frac{\eta_1t}{4}}\int_\frac{t}{2}^t\text{e}^{-{\eta_5}(t-\tau)}\mathrm{d}\tau\\&+C\text{e}^{-\frac{\eta_1t}{4}}\left(\int_\frac{t}{2}^t
\|\left(\sqrt{\rho}\dot{\bf u}\right)(\tau)\|_{L^2(\mathbb
T^{3})}^2\mathrm{d}\tau\right)^\frac{3}{4}\left(\int_\frac{t}{2}^t\text{e}^{-4{\eta_5}(t-\tau)}\mathrm{d}\tau\right)^\frac{1}{4}\\
\le
&~C\left(\text{e}^{-\frac{\eta_1t}{4}}+\text{e}^{-\frac{\eta_5t}{2}}\right)
\end{split}\end{equation}
as claimed in  \eqref{1.9}. We complete the proof of Theorem~\ref{2mainth}.\hfill$\Box$

\bigskip

\noindent{\bf Proof of Theorem~\ref{3mainth}.}  If the conclusion in Theorem~\ref{3mainth} is false, then there exists a  time
$T_7$ such that $\inf\limits_{{\bf x\in\mathbb T^3}}\rho({\bf x},T_7)>0$. Due to Theorem~\ref{2mainth}, one deduces that
\begin{equation}\nonumber\lim\limits_{t\rightarrow\infty}\|(\rho-1)(\cdot,t)\|_{L^\infty(\mathbb T^3)}=0,
\end{equation}
which implies that there exists a time $T_8(\geq T_7)$, such that for a.e. ${\bf x}\in\mathbb T^3$,
\begin{equation}\label{4.16}\frac{1}{2}\leq \rho({\bf x},T_8)\leq \frac{3}{2}.
\end{equation}
By virtue of  \eqref{1.6}, we see that \begin{equation}\label{4.17}\rho(t)\in C[0,\infty),
\end{equation}
where we have abbreviated $\rho({\bf x}(t),t)$ by $\rho(t)$ for
convenience. Due to \eqref{1.4} and
$\inf\limits_{{\bf x}\in\mathbb T^3}\rho_0({\bf x})=0$, it is clear
that for any $\varepsilon>0$,
 there exists a positive constant  $T_\varepsilon<T_8$  such that
\begin{equation}\label{4.18}\inf\limits_{{\bf x}\in\mathbb T^3}\rho({\bf x},T_\varepsilon)=\varepsilon\quad \text{and}\quad \inf\limits_{t\in[T_\varepsilon,T_8]}\rho(t)\ge\varepsilon.
\end{equation}
Therefore, there exists a
non--zero measurable $\mathcal A$ such that
\begin{equation}\varepsilon\le \rho({\bf y}(T_\varepsilon),T_\varepsilon)\le 2\varepsilon,\label{4.19}
 \end{equation}
 for any ${\bf y}\in \mathcal A$ if $\varepsilon$ is sufficiently small. Integrating \eqref{4.4} along particle trajectories from $T_\varepsilon$ to $T_8$, and using \eqref{1.7}, \eqref{4.7}, \eqref{4.9} and \eqref{4.19}, we have
\begin{equation}\log\rho({\bf x}(T_8,{\bf y}),T_8)\le \log\rho({\bf y}(T_\varepsilon),T_\varepsilon)+C(M,T_8)\le \log(2\varepsilon)+C(M,T_8),
\end{equation}
which contradicts \eqref{4.16} if $\varepsilon$ is small enough. This completes the proof of Theorem~\ref{3mainth}.
 \hfill$\Box$

\bigskip

\section*{Acknowledgments}

Guochun Wu's research was partially supported by National Natural Science Foundation of China
$\#$12271114, and Natural
Science Foundation of Fujian Province $\#$ 2022J01304. L. Yao's research  was
partially  supported by National Natural Science Foundation of China
$\#$12171390, $\#$11931013, and Natural Science Basic Research Plan
for Distinguished Young Scholars in Shaanxi Province of China (Grant
No. 2019JC-26). Yinghui Zhang' research is partially supported by National Natural Science Foundation of China
$\#$12271114, $\#$12001189, and
Guangxi Natural Science Foundation $\#$2019JJG110003,
$\#$2019AC20214.

\bigskip

\end{document}